\newtheorem{theorem}{Theorem}[section]
\newtheorem{lemma}[theorem]{Lemma}
\newtheorem{proposition}[theorem]{Proposition}
\newtheorem{remark}[theorem]{Remark}
\newtheorem{example}[theorem]{Example}
\providecommand{\norm}[1]{\lVert#1\rVert} 
\providecommand{\abs}[1]{\lvert#1\rvert}
\DeclareMathOperator{\Herm}{Herm}
\DeclareMathOperator{\Ran}{Ran} 
\DeclareMathOperator{\ran}{Ran}
\DeclareMathOperator{\Ker}{Ker}
\DeclareMathOperator{\Dom}{Dom}
\DeclareMathOperator{\dom}{dom}
\newcommand{\au}{\underline{a}}
\newcommand{\R}{\mathbb{R}}
\newcommand{\C}{\mathbb{C}}
\newcommand{\N}{\mathbb{N}}
\newcommand{\Me}{{\mathcal M}}
\newcommand{\He}{{\mathcal H}}
\newcommand{\Ke}{{\mathcal K}}
\newcommand{\Ge}{{\mathcal G}}
\newcommand{\Ie}{{\mathcal I}}
\newcommand{\Ee}{{\mathcal E}}
\newcommand{\De}{{\mathcal D}}
\newcommand{\We}{{\mathcal W}}
\author{Amru Hussein}
\address{A.~Hussein, FB 08 - Institut f\"{u}r Mathematik,
Johannes Gutenberg-Universit\"{a}t Mainz,Staudinger Weg 9, 55099 Mainz, Germany}
\email{hussein@mathematik.uni-mainz.de}
\title[Bounds on the negative eigenvalues of Laplacians on graphs]{Bounds on the negative eigenvalues of Laplacians on finite metric graphs}
\subjclass[2010]{Primary 34B45, Secondary 35J05, 34L15}
\keywords{Differential operators on metric graphs; negative eigenvalues of self--adjoint Laplacians; lower bounds on the spectrum; eigenvalue zero}
\begin{document}

\begin{abstract}
For a self--adjoint Laplace operator on a finite, not necessarily compact, metric graph lower and upper bounds on each of the negative eigenvalues are derived. For compact finite metric graphs Poincar\'{e} type inequalities are given.
\end{abstract}
   \maketitle

A metric graph is a locally linear one dimensional space with singularities at the vertices. One can think roughly of a metric graph as a union of finitely many intervals $[0,a_i]$, $a_i>0$ or $[0,\infty)$ glued together at their end points. The subject of self-adjoint Laplacians on metric graphs has attracted a lot of attention in the last decades. Without going into details the author refers to \cite{QG,VKRS1999,PKQG1} and the references therein. 

In the article \cite{PKQG1} it has been proven that on a finite not necessarily compact, metric graph all self-adjoint Laplacians are semi-bounded from below and furthermore the negative spectrum is purely discrete. P.~Kuchment, see \cite{PKQG1}, and V.~Kostrykin together with R.~Schrader, see \cite{VKRS2006}, gave two different lower bounds on the spectrum of self--adjoint Laplacians on finite metric graphs. The exercise discussed here is to estimate each negative eigenvalue of a self--adjoint Laplacian from below and from above. The smallest eigenvalue of a self--adjoint Laplace operator $-\widetilde{\Delta}$ is exactly the growth bound of the semigroup generated by the operator $\widetilde{\Delta}$. The lower and upper bounds on the bottom of the negative spectrum give a priori estimates on the growth bound of this semigroup. This is the main application the author has in mind.

The problem of calculating the number of negative eigenvalues has been solved by A.~Luger and J.~Behrndt in \cite{AL2009}, where variational principles for self--adjoint operator pencils, see \cite{ML2004}, were used. In the present study two--sided estimates on the negative eigenvalues are derived applying the same variational methods to the eigenvalues themselves rather than to their number.  Three types of estimates are obtained, each is optimal in a certain situation. In particular the formerly known lower bounds on the spectrum given in \cite{PKQG1} and \cite{VKRS2006} are re--obtained. The content of this paper is part of the author's PhD thesis, see \cite[Chapter 2]{Ich}. 

The note is structured as follows. First the basic concepts of self-adjoint Laplacians on metric graphs are introduced, followed by the discussion of the negative eigenvalues. In Section 3 the bounds on the negative eigenvalues are stated. The eigenvalue zero of self--adjoint Laplacians on compact metric graphs is discussed because it is technically very close to the problem of the negative eigenvalues. Poicar\'{e} type inequalities on compact metric graphs are obtained in Section~\ref{secPoicare}. Eventually the variational methods are presented and used to prove the main results.

\subsection*{Acknowledgements}
I would like to thank Vadim Kostrykin for providing the interesting question and for pointing out the article \cite{AL2009} and I am grateful to Annemarie Luger for making this article accessible for me prior to publication.

\section{Laplace operators on finite metric graphs} 

The notation is borrowed from the article \cite{VKRS2006} and it is summarized here briefly. A graph is a $4$-tuple $\Ge = \left( V, \Ie,\Ee, \partial \right)$, where $V$ denotes the set of \textit{vertices}, $\Ie$ the set of \textit{internal edges} and $\Ee$ the set of \textit{external edges}, where the set $\Ee \cup \Ie$ is summed up in the notion \textit{edges}. The \textit{boundary map} $\partial$ assigns to each internal edge $i\in \Ie$ an ordered pair of vertices $\partial (i)= (v_1,v_2)\in V\times V$, where $v_1$ is called its \textit{initial vertex} and $v_2$ its \textit{terminal vertex}. Each external edge $e\in \Ee$ is mapped by $\partial$ onto a single, its initial, vertex. A graph is called finite if $\abs{V}+\abs{\Ie}+\abs{\Ee}<\infty$ and afinite graph is called \textit{compact} if $\Ee=\emptyset$ holds.

The graph is endowed with the following metric structure. Each internal edge $i\in \Ie$ is associated with an interval $[0,a_i]$ with $a_i>0$ such that its initial vertex corresponds to $0$ and its terminal vertex to $a_i$. Each external edge $e\in \Ee$ is associated to the half line $[0,\infty)$, such that $\partial(e)$ corresponds to $0$. The numbers $a_i$ are called \textit{lengths} of the internal edges $i\in \Ie$ and they are summed up into the vector \\  $\au=\{a_i\}_{i\in \Ie}\in \R_+^{\abs{\Ie}}$, and  one sets
\begin{eqnarray*}
a_{\min}:= \min_{i\in \Ie} a_i & \mbox{and} & a_{\max}:= \max_{i\in \Ie} a_i.
\end{eqnarray*}
The 2-tuple consisting of a finite graph endowed with a metric structure is called a \textit{metric graph} $(\Ge,\au)$. The metric on $(\Ge,\au)$ is defined via minimal path lengths. 

Given a metric graph $(\Ge,\au)$ one considers the Hilbert space
\begin{eqnarray*}
 \He \equiv \He(\Ee,\Ie,\au)= \He_{\Ee} \oplus \He_{\Ie}, & \displaystyle{\He_{\Ee}= \bigoplus_{e\in\Ee} \He_e,} & \He_{\Ie}= \bigoplus_{i\in\Ie} \He_i,
\end{eqnarray*}     
where $\He_j= L^2(I_j)$ with 
$$I_j= \begin{cases} [0,a_j], & \mbox{if} \ j\in \Ie, \\ [0,\infty), &\mbox{if} \ j\in \Ee.  \end{cases}$$

To provide suitable domains of definition for operators and forms one introduces appropriate Sobolev spaces. By $\We_j$ with $j\in \Ee \cup \Ie$ one denotes the set of all $\psi_j\in \He_j$ such that $\psi_j$ is absolutely continuous and $\psi_j^{\prime}$ is square integrable. On the whole metric graph one considers the direct sum 
\begin{eqnarray*}
\We= \bigoplus_{j\in \Ee \cup \Ie} \We_j.
\end{eqnarray*}
By $\De_j$ with $j\in \Ee \cup \Ie$ denote the set of all $\psi_j\in \He_j$ such that $\psi_j$ and its derivative $\psi_j^{\prime}$ are absolutely continuous and $\psi_j^{\prime\prime}$ is square integrable. Let $\De_j^0$ denote the set of all elements $\psi_j\in \De_j$ with
\begin{eqnarray*}
\psi_j(0)=0, &  \psi^{\prime}(0)=0, & \mbox{for}  \ j\in \Ee,    \\
\psi_j(0)=0, &  \psi^{\prime}(0)=0, & \psi_j(a_j)=0, \  \psi^{\prime}(a_j)=0, \ \mbox{for} \ j\in \Ie. 
\end{eqnarray*}
Let be $\Delta$ be the differential operator 
\begin{eqnarray*}
\left( \Delta \psi\right)_j (x) = \frac{d^2}{dx}\psi_j(x), & j\in \Ee\cup \Ie, & x\in I_j
\end{eqnarray*}
with domain $\De$ and $\Delta^0$ its restriction on the domain $\De^0$, where 
\begin{eqnarray*}
\De= \bigoplus_{j\in \Ee \cup \Ie} \De_j & \mbox{and} &  \De^0= \bigoplus_{j\in \Ee \cup \Ie} \De_j^0.
\end{eqnarray*}

The operator $\Delta^0$ is closed and symmetric with equal deficiency indices $(d,d)$, where $d=\abs{\Ee}+2\abs{\Ie}$. Its Hilbert space adjoint is $\Delta=(\Delta^0)^*$. 

Self-adjoint extensions can be discussed in terms of boundary conditions. For this purpose one defines the auxiliary Hilbert space
\begin{equation}\label{Ke}
\Ke \equiv \Ke(\Ee, \Ie) = \Ke_{\Ee}  \oplus \Ke_{\Ie}^- \oplus \Ke_{\Ie}^+
\end{equation}
with $\Ke_{\Ee} \cong \C^{\abs{\Ee}}$ and $\Ke_{\Ie}^{(\pm)} \cong \C^{\abs{\Ie}}$. For $\psi\in \De$ one defines the vectors of boundary values
\begin{eqnarray*}
\underline{\psi}= \begin{bmatrix} \{\psi_{e}(0)\}_{e\in\Ee} \\ 
\{\psi_{i}(0)\}_{i\in\Ie} \\
\{\psi_{i}(a_i)\}_{i\in\Ie}
\end{bmatrix} &\mbox{and} & 
\underline{\psi^{\prime}}= \begin{bmatrix} \{\psi_{e}^{\prime}(0)\}_{e\in\Ee} \\ 
\{\psi_{i}^{\prime}(0)\}_{i\in\Ie} \\
\{-\psi_{i}^{\prime}(a_i)\}_{i\in\Ie}
\end{bmatrix}.
\end{eqnarray*}
One sets 
\begin{equation*}
[\psi]:= \underline{\psi} \oplus \underline{\psi^{\prime}} \in \Ke \oplus \Ke
\end{equation*}
and denotes by the redoubled space $\Ke^2= \Ke \oplus \Ke$ the \textit{space of boundary values}. 

Let be $A$ and $B$ be linear maps in $\Ke$. By $(A, \, B)$ one denotes the linear map from $\Ke^2=\Ke \oplus \Ke$ to $\Ke$ defined by 
$$ (A, \, B) (\chi_1 \oplus \chi_2) = A\chi_1 + B \chi_2, $$
for $\chi_1,\chi_2\in\Ke$. One sets 
\begin{equation}\label{Me}
\Me(A,B):=\Ker (A, \, B).
\end{equation}
With any subspace $\Me\subset \Ke^2$ of the form \eqref{Me} one can associate an extension $\Delta(\Me)$ of $\Delta^0$ which acts as $\Delta$ on the domain
\begin{equation*}
\mbox{Dom} (\Delta(\Me))= \{ \psi \in \De \mid [\psi] \in \Me   \}.
\end{equation*}
For $\Me=\Me(A,B)$ an equivalent description is that $\mbox{Dom} (\Delta(\Me))$ consists of all functions $\psi\in \De$ that satisfy the boundary conditions
\begin{equation}
A \underline{\psi} + B \underline{\psi^{\prime}} = 0, 
\end{equation}
and one also writes $\Delta(\Me)=\Delta(A,B)$. All self-adjoint extensions of $\Delta^0$ can be parametrized by matrices $A,B\in \mbox{End}(\Ke)$, with
\begin{enumerate}
\item $(A, \,  B)\colon \Ke^2 \to \Ke$ is surjective 
\item $AB^*$ self-adjoint,
\end{enumerate}  
see \cite{VKRS1999} and the references therein. The parametrization by the matrices $A$ and $B$ is not unique, since two parametrisations $A,B$ and $A^{\prime},B^{\prime}$ describe the same operator if and only if the Lagrangian subspaces $\Me(A,B)$ and $\Me(A^{\prime},B^{\prime})$ agree. In \cite[Corollary 5]{PKQG1} a unique way to parametrize self-adjoint Laplacians in terms of an orthogonal projection $P$ acting in $\Ke$ and of a self-adjoint operator $L$ acting in the subspace $\Ker P$ is given. For any self-adjoint Laplacian one has $-\Delta(A,B)=-\Delta(A^{\prime},B^{\prime})$ with $A^{\prime}= L+P$ and $B^{\prime}=P^{\perp}$, where the change over is given by
\begin{align}\label{KuchmentL2}
L= \left(B\mid_{\ran B^*}\right)^{-1}A P^{\perp}
\end{align}
and $P$ denotes the orthogonal projector onto $\Ker B\subset \Ke$ and $P^{\perp}= \mathds{1}-P$ is the complementary projector. The strictly positive part of the operator $L$ is denoted by $L_+$, its strictly negative part by $L_-$ and the orthogonal projector onto the kernel of $L$ as a map in the space $\ran P^{\perp}\subset \Ke$ is denoted by $L_0$.  

\section{Negative eigenvalues}
The negative spectrum of a self--adjoint Laplace operator $-\Delta(A,B)$ consists of only finitely many eigenvalues of finite multiplicity, because the minimal operator $-\Delta^0$ has only finite deficiency indices. A fundamental system of the equation 
\begin{eqnarray*}
\left( -\frac{d^2}{dx^2} + \kappa^2  \right)u(\cdot,\kappa)=0, & & \kappa\neq 0
\end{eqnarray*}
is given by the functions $e^{-\kappa x}$ and $ e^{\kappa x}$. For $\kappa>0$ only the first mentioned function $e^{-\kappa x}$ is square integrable on the half line $[0,\infty)$ and hence on the external edges. Consequently an \textit{Ansatz} for a square integrable eigenfunction to a negative eigenvalue $-\kappa^2$ is 
\begin{align*}
\psi(x,i\kappa)= \begin{cases} s_j(i\kappa) e^{-\kappa x}, & j\in \Ee, \\
\alpha_j(i\kappa) e^{-\kappa x} + \beta_j(i\kappa) e^{\kappa x}, & j\in \Ie.  \end{cases}
\end{align*}
The function $\psi(\cdot,i\kappa)$ has the traces
\begin{align*}
\underline{\psi(\cdot,i\kappa)}= X\left( i\kappa,\underline{a} \right) \begin{bmatrix} \{s_j(i\kappa)\}_{j\in \Ee} \\ \{\alpha_j(i\kappa)\}_{j\in \Ie} \\ \{\beta_j(i\kappa)\}_{j\in \Ie} \end{bmatrix}, && 
\underline{\psi(\cdot,i\kappa)}^{\prime}= -\kappa \cdot Y\left(i\kappa,\underline{a}\right) \begin{bmatrix} \{s_j(i\kappa)\}_{j\in \Ee} \\ \{\alpha_j(i\kappa)\}_{j\in \Ie} \\ \{\beta_j(i\kappa)\}_{j\in \Ie} \end{bmatrix},
\end{align*}
where 
\begin{eqnarray*}\index{$X\left( i\kappa,\underline{a} \right)$}\index{$Y\left( i\kappa,\underline{a} \right)$}
X\left( i\kappa,\underline{a} \right)=\begin{bmatrix} \mathds{1} & 0 & 0 \\ 0 & \mathds{1} & \mathds{1} \\ 0 & e^{-\kappa\underline{a}} & e^{\kappa\underline{a}}\end{bmatrix} & \mbox{and}& 
Y\left( i\kappa,\underline{a} \right)=\begin{bmatrix} \mathds{1} & 0 & 0 \\ 0 & \mathds{1} & -\mathds{1} \\ 0 & -e^{-\kappa\underline{a}} & e^{\kappa\underline{a}}\end{bmatrix}
\end{eqnarray*}
are given with respect to the decomposition $\Ke=\Ke_{\Ee}  \oplus \Ke_{\Ie}^- \oplus \Ke_{\Ie}^+$ given in \eqref{Ke}. Here $e^{\pm\kappa\underline{a}}$ denote $\abs{\Ie}\times \abs{\Ie}$--diagonal matrices with entries $\{e^{\pm\kappa\underline{a}}\}_{i,j}=\delta_{i,j} e^{\pm \kappa a_i }$. The function $\psi(\cdot,i\kappa)$ is indeed an eigenfunction to the eigenvalue $-\kappa^2 <0$ if and only if one has $\psi(\cdot,i\kappa)\in\Dom(-\Delta(A,B))$, where $\kappa>0$ is the positive square root of $\kappa^2$. This is the case if and only if the \textit{Ansatz} function $\psi(\cdot,i\kappa)$ satisfies the boundary conditions, which are encoded into the equation
\begin{align}\label{eq:detZLB}
 Z\left(i\kappa,\underline{a},A,B\right) \begin{bmatrix} \{s_j(i\kappa)\}_{j\in \Ee} \\ \{\alpha_j(i\kappa)\}_{j\in \Ie} \\ \{\beta_j(i\kappa)\}_{j\in \Ie} \end{bmatrix}=0,
\end{align}\index{$Z\left(i\kappa,\underline{a},A,B\right)$}
where 
\begin{align*}
 Z\left(i\kappa,\underline{a},A,B\right)=AX(i\kappa,\underline{a})-\kappa\,BY(i\kappa,\underline{a}).
\end{align*}
The matrices $X\left(i\kappa,\underline{a}\right)$ and $Y\left(i\kappa,\underline{a}\right)$ are invertible for $\kappa> 0$. Hence equation~\eqref{eq:detZLB} has non--trivial solutions if and only if
\begin{eqnarray}\label{eq:detMLB}
\det(A + BM(\kappa,\underline{a}))=0 &\mbox{with} & M(\kappa,\underline{a})= -\kappa \cdot Y(i\kappa,\underline{a})X(i\kappa,\underline{a})^{-1}.
\end{eqnarray}\index{$M(\kappa,\underline{a})$}
The operator $M(\kappa,\underline{a})$ has the diagonalisation 
$$M\left( \kappa, \underline{a} \right)= Q D\left(\kappa,\underline{a}\right)Q$$
with 
\begin{eqnarray*}
Q=\begin{bmatrix}
\mathds{1} &0 & 0 \\
0 & \frac{\mathds{1}}{\sqrt{2}} & \frac{\mathds{1}}{\sqrt{2}} \\
0 & \frac{\mathds{1}}{\sqrt{2}} & -\frac{\mathds{1}}{\sqrt{2}}
\end{bmatrix} & \mbox{and} &
D(\kappa,\au)=\begin{bmatrix}
-\kappa &0 & 0 \\
0 & \lambda\left(\kappa,\underline{a}\right) & 0\\
0 & 0 & \mu\left(\kappa,\underline{a}\right)
\end{bmatrix},
\end{eqnarray*}\index{$D\left(\kappa,\underline{a}\right)$}
where
\begin{eqnarray*}
\lambda\left(\kappa,\underline{a}\right)= -\kappa \tanh(\kappa\underline{a}/2) &\mbox{and}  &
\mu\left(\kappa,\underline{a}\right)= \frac{-\kappa}{\tanh(\kappa\underline{a}/2)}
\end{eqnarray*}
are $\abs{\Ie}\times \abs{\Ie}$--diagonal matrices with entries 
\begin{eqnarray*}
\{\lambda\left(\kappa,\underline{a}\right)\}_{i,j}=  -\delta_{i,j}\kappa \tanh(\kappa a_j/2) & \mbox{and} & \{\mu\left(\kappa,\underline{a}\right)\}_{i,j}=  \delta_{i,j}\frac{-\kappa}{\tanh(\kappa a_j/2)},
\end{eqnarray*}
respectively. Note that the operator $Q$ is a symmetry, since $Q^2=\mathds{1}$ and $Q^{\ast}=Q$ hold. One sets 
\begin{eqnarray}\label{M0LB}
\displaystyle{M(0,\au):=\lim_{\kappa \to 0} M(\kappa,\au)}, &\mbox{hence} & M(0,\au)= \begin{bmatrix} 0 & 0 & 0 \\ 0 & -\frac{1}{\au} & \frac{1}{\au}  \\ 0 & \frac{1}{\au} & -\frac{1}{\au} \end{bmatrix},
\end{eqnarray}\index{$M(0,\au)$}
since
\begin{eqnarray*}
\displaystyle{\lim_{\kappa\to 0} -\kappa \tanh(\kappa a_i/2)=0} &\mbox{and} & \displaystyle{\lim_{\kappa\to 0}\frac{-\kappa}{\tanh(\kappa\ a_i/2)}= -\frac{2}{a_i}}.
\end{eqnarray*}
Consider instead of the parametrization by $A,B$ the equivalent boundary conditions which are defined by $A^{\prime}, B^{\prime}$, where $A^{\prime}=L+P$ and $B^{\prime}=P^{\perp}$ with $P$ the orthogonal projector onto $\Ker B$ and $L$ the Hermitian operator in $\Ran P^{\perp}$ which is given by \eqref{KuchmentL2}. The decomposition $$\Ke=(\ran P) \oplus (\ran P^{\perp})$$ induces in \eqref{eq:detMLB} the block structure 
\begin{align*}
\det \left(\begin{bmatrix}  P^{\perp} LP^{\perp} & 0 \\ 0 & P  \end{bmatrix}
+ \begin{bmatrix}  P^{\perp} M(\kappa,\au)P^{\perp} & P^{\perp} M(\kappa,\au)P \\ 0 & 0 \end{bmatrix} \right)=0.
\end{align*}
Consequently $-\kappa^2$ is a negative eigenvalue of $-\Delta(A,B)$ if and only if the Hermitian operator 
\begin{eqnarray*}\index{$L(\kappa,\au)$}
L(\kappa,\au):=\left(L + P^{\perp} M(\kappa,\au)P^{\perp} \right), & \kappa>0,
\end{eqnarray*}
considered as an operator in the space $\Ran P^{\perp}$ is not invertible. The multiplicity of $-\kappa^2$ equals the dimension of  $\Ker L(\kappa,\au)$. One sets $L(0,\au)=L+ P^{\perp}M(0,\au)P^{\perp}$. On star graphs, that is for $\Ie=\emptyset$, one has simply $L(\kappa)=L-\kappa P^{\perp}$. 

Note that by the use of $L(\cdot,\au)$ instead of $Z(\cdot,A,B,\au)$ the problem of solving the eigenvalue equation transforms to a non-linear eigenvalue problem with values in the Hermitian operators. Variational methods apply to the function $L(\cdot,\au)$. This has been used to determine the number of negative eigenvalues in \cite[Theorem 1]{AL2009}, which is reformulated here as 
\begin{proposition}[{\cite[Theorem 1]{AL2009}}]\label{Luger}
The number of negative eigenvalues of the self-adjoint Laplace operator $-\Delta(A,B)$ (counted with multiplicities) is given by the number of positive eigenvalues of $L(0,\au)$ (counted with multiplicities). In particular, $-\Delta(A,B)$ is non--negative if and only if the operator $L(0,\au)$ is non--positive.
\end{proposition}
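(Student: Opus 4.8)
The plan is to convert the statement into a monotonicity-and-counting argument for the Hermitian operator function $\kappa\mapsto L(\kappa,\au)$ on the fixed finite-dimensional space $\Ran P^{\perp}$. By the discussion preceding the proposition, a number $-\kappa^{2}$ with $\kappa>0$ is a negative eigenvalue of $-\Delta(A,B)$ exactly when $\Ker L(\kappa,\au)\neq\{0\}$, and its multiplicity equals $\dim\Ker L(\kappa,\au)$. Since $\kappa\mapsto-\kappa^{2}$ is injective on $(0,\infty)$, distinct crossing points yield distinct eigenvalues, so the number of negative eigenvalues of $-\Delta(A,B)$, counted with multiplicity, equals $\sum_{\kappa>0}\dim\Ker L(\kappa,\au)$. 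The whole problem thus reduces to counting, with multiplicity, the values $\kappa>0$ at which an eigenvalue of $L(\kappa,\au)$ vanishes.

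The first and central step is to show that $L(\kappa,\au)$ is \emph{strictly} decreasing in $\kappa$ in the sense of quadratic forms. Because $L$ is independent of $\kappa$ and $M(\kappa,\au)=QD(\kappa,\au)Q$ with the $\kappa$-independent symmetry $Q$, it suffices to differentiate the diagonal matrix $D(\kappa,\au)$. Each of its entries, namely $-\kappa$, $\lambda(\kappa,a_{i})=-\kappa\tanh(\kappa a_{i}/2)$ and $\mu(\kappa,a_{i})=-\kappa/\tanh(\kappa a_{i}/2)$, is a strictly decreasing function of $\kappa$ on $(0,\infty)$; this is a short calculus check using that $x\mapsto x\tanh x$ and $x\mapsto x\coth x$ are strictly increasing for $x>0$. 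Hence $\tfrac{d}{d\kappa}D(\kappa,\au)$ is negative definite, so $\tfrac{d}{d\kappa}M(\kappa,\au)=Q\bigl(\tfrac{d}{d\kappa}D(\kappa,\au)\bigr)Q$ is negative definite on $\Ke$, and its compression $P^{\perp}\bigl(\tfrac{d}{d\kappa}M(\kappa,\au)\bigr)P^{\perp}$ is negative definite on $\Ran P^{\perp}$. Consequently every eigenvalue of $L(\kappa,\au)$, listed as a continuous function $\nu_{1}(\kappa)\geq\dots\geq\nu_{m}(\kappa)$, is strictly decreasing in $\kappa$.

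Next I would fix the two endpoints. Let $n_{+}(\kappa)$ denote the number of strictly positive eigenvalues of $L(\kappa,\au)$ counted with multiplicity. As $\kappa\to0^{+}$ one has $L(\kappa,\au)\to L(0,\au)$ by continuity, and by strict monotonicity $\lim_{\kappa\to0^{+}}n_{+}(\kappa)=n_{+}(0)$, where $n_{+}(0)$ is the number of positive eigenvalues of $L(0,\au)$: each eigenvalue positive at $\kappa=0$ stays positive for small $\kappa>0$, while each eigenvalue that is zero at $\kappa=0$ immediately becomes negative. As $\kappa\to\infty$ the largest diagonal entry of $D(\kappa,\au)$ tends to $-\infty$, so $\lambda_{\max}\bigl(M(\kappa,\au)\bigr)\to-\infty$ and hence $L(\kappa,\au)\to-\infty$ in the form order; thus $L(\kappa,\au)<0$ for all large $\kappa$ and $\lim_{\kappa\to\infty}n_{+}(\kappa)=0$. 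By strict monotonicity $n_{+}$ is non-increasing on $(0,\infty)$, and at each $\kappa^{*}>0$ it drops by exactly $\dim\Ker L(\kappa^{*},\au)$, since the eigenvalues vanishing at $\kappa^{*}$ are precisely those passing strictly from positive to negative there.

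Summing these jumps telescopes:
\begin{equation*}
\sum_{\kappa>0}\dim\Ker L(\kappa,\au)=\lim_{\kappa\to0^{+}}n_{+}(\kappa)-\lim_{\kappa\to\infty}n_{+}(\kappa)=n_{+}(0),
\end{equation*}
which is the number of positive eigenvalues of $L(0,\au)$; this is the first assertion. The equivalence follows immediately, since $-\Delta(A,B)$ is non-negative exactly when it has no negative eigenvalue, i.e.\ when $n_{+}(0)=0$, i.e.\ when $L(0,\au)$ has no positive eigenvalue, which is to say $L(0,\au)\leq0$. The only genuinely delicate point is the strict form-monotonicity of the second step together with the bookkeeping of multiplicities at the crossings; once strict monotonicity is secured, the telescoping of $n_{+}$ makes the multiplicities match automatically, and the injectivity of $\kappa\mapsto-\kappa^{2}$ guarantees that distinct crossings correspond to distinct eigenvalues of $-\Delta(A,B)$.
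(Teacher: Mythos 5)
Your proposal is correct, and it is worth noting that it is a genuine proof, whereas the paper itself offers none: Proposition~\ref{Luger} is quoted from \cite[Theorem 1]{AL2009}, whose proof rests on the variational principle for self--adjoint operator functions of \cite{HL2000,ML2004} (restated as Theorem~\ref{VP} in Section 5); the paper's own contribution in this direction is only Lemma~\ref{lemma1LB}, which verifies that $L(\cdot,\au)$ satisfies the hypotheses of that principle. Your argument replaces the appeal to this machinery by a self--contained counting scheme: strict form--monotonicity of $\kappa\mapsto L(\kappa,\au)$ obtained by differentiating $D(\kappa,\au)$ (the calculus facts you invoke, that $x\mapsto x\tanh x$ and $x\mapsto x\coth x$ are strictly increasing on $(0,\infty)$, are correct, and the compression $P^{\perp}\bigl(\tfrac{d}{d\kappa}M(\kappa,\au)\bigr)P^{\perp}$ of a negative definite operator is indeed negative definite on $\Ran P^{\perp}$), continuity and strict decrease of the ordered eigenvalue branches $\nu_1(\kappa)\geq\dots\geq\nu_m(\kappa)$, correct endpoint behaviour at $\kappa\to 0^{+}$ and $\kappa\to\infty$, and a telescoping of the counting function $n_{+}(\kappa)$. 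In effect you re--prove, under the stronger but here available hypotheses of differentiability and global strict monotonicity, exactly the special case of Theorem~\ref{VP} that the proposition needs; the paper itself remarks that the proof of Theorem~\ref{VP} is based on the monotone jump function $\lambda\mapsto\dim T_{-}(\lambda)$, of which your $n_{+}$ is the mirror image, so the two arguments are close in spirit but yours avoids the generalized Rayleigh functional and min--max formulas entirely. What the citation route buys is precisely those formulas: the variational characterization of the crossing points and the comparison principle Theorem~\ref{CVP} are indispensable for the two--sided bounds of Theorems~\ref{theorem1LB}--\ref{theorem3LB}, so the paper must set up that machinery anyway and gets the eigenvalue count for free. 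What your route buys is independence from the literature and elementary bookkeeping; its one limitation is that it yields the count but not the min--max characterization of the $\kappa_i$, so it could not substitute for Theorem~\ref{VP} in the rest of the paper. I see no gap in your argument; the only points deserving explicit mention are that the sum $\sum_{\kappa>0}\dim\Ker L(\kappa,\au)$ is finite because each of the finitely many strictly decreasing branches crosses zero at most once, and that an eigenvalue branch vanishing at $\kappa=0$ becomes strictly negative for $\kappa>0$, which follows from strict monotonicity on $(0,\infty)$ together with continuity at $0$ --- both of which your write--up handles correctly.
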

\begin{remark}
Let $-\Delta(A,B)$ be self--adjoint, $P$ be the orthogonal projector onto $\Ker B$ and let $L$ be given by \eqref{KuchmentL2}. Then the operator $L(\kappa,\au)=\left(L + P^{\perp} M(\kappa,\au)P^{\perp} \right)$ as an operator in the space $\Ran P^{\perp}$ is invertible if and only if the operator $$\tau_{A,B}(\kappa,\au)=AB^* + B M(\kappa,\au)B^*$$ considered as an operator in the space $\Ran B$ is invertible. In the original formulation of \cite[Theorem 1]{AL2009} the operator $\tau_{A,B}(0,\au)$ is used instead of $L(0,\au)$. 
\end{remark}

\section{Bounds on the negative eigenvalues}

Applying the mentioned variational methods used in \cite{AL2009} delivers three different lower and upper bounds on each of the negative eigenvalues of a self-adjoint Laplacian $-\Delta(A,B)$. Each type of bound reflects the decay properties of one of the three blocks of the block-diagonal operator $D(\kappa,\au)$. The estimates are obtained by proposing matrix valued functions, which are majorants or minorants to the function $L(\cdot,\au)$ in the sense of quadratic forms. Denote by 
$$-\kappa_1^2\leq \ldots \leq -\kappa_n^2<0$$ 
the negative eigenvalues of $-\Delta(A,B)$ (counted with multiplicities). For technical reasons it is convenient to consider the positive square roots of the absolute value of the negative eigenvalues. These are
$$\kappa_1\geq \ldots \geq \kappa_n>0$$ 
(counted with multiplicities). By Proposition~\ref{Luger} the number of negative eigenvalues of the operator $-\Delta(A,B)$ and the number of positive eigenvalues of $L(0,\au)$ coincide. The positive eigenvalues of $L(0,\au)$ are
$$l_1 \geq \ldots \geq l_n>0$$ 
(counted with multiplicities) and $l_1=\norm{\left(L(0,\au)\right)_+}$. For $l>0$ the equation
\begin{eqnarray*}
l = \frac{\kappa}{\tanh(\kappa a/2)} -\frac{2}{a}
\end{eqnarray*}
has a unique non--trivial solution, which is denoted by $\eta(l,a)$. \index{$\eta(l,a)$} 

\begin{theorem}\label{theorem1LB}
Let $-\Delta(A,B)$ be self--adjoint. Then the square roots of the absolute values of the negative eigenvalues of $-\Delta(A,B)$ obey the two--sided estimates 
\begin{eqnarray*}
l_i \leq \kappa_i \leq \eta(l_i,a_{\min}), & 1 \leq i \leq n.
\end{eqnarray*}
The lower bound on the spectrum 
$$-\eta(l_1,a_{\min})^2\leq -\Delta(A,B)$$
is optimal if and only if 
\begin{eqnarray*}
\mbox{span} \left\{ \begin{bmatrix} 0 \\ e_i \\ -e_i \end{bmatrix} \Bigg\vert a_i=a_{\min} \right\} \cap \Ker(L(0,\au)-l_1) \neq \{0\},
\end{eqnarray*} 
where $e_i$ denotes the $i$--th canonical basis vector in $\Ke_{\Ie}^+$ and $\Ke_{\Ie}^-$, respectively, and $l_1=\norm{\left(L(0,\au)\right)_+}$ is the largest positive eigenvalue of $L(0,\au)$.
\end{theorem}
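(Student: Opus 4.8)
The plan is to exploit the nonlinear eigenvalue problem recorded above: $-\kappa^2$ is a negative eigenvalue of $-\Delta(A,B)$ precisely when the Hermitian operator $L(\kappa,\au)=L+P^{\perp}M(\kappa,\au)P^{\perp}$ on $\Ran P^{\perp}$ has nontrivial kernel, of dimension $\dim\Ker L(\kappa,\au)$. First I would record the monotonicity that drives everything: since each diagonal entry of $D(\kappa,\au)$, namely $-\kappa$, $-\kappa\tanh(\kappa a_i/2)$ and $-\kappa/\tanh(\kappa a_i/2)$, is strictly decreasing in $\kappa$, the matrix $M(\kappa,\au)=QD(\kappa,\au)Q$ is strictly decreasing in the sense of forms, hence so is $L(\cdot,\au)$ on $\Ran P^{\perp}$, and $L(\kappa,\au)\to-\infty$ as $\kappa\to\infty$. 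Ordering the eigenvalues of $L(\kappa,\au)$ decreasingly as $\nu_1(\kappa)\ge\nu_2(\kappa)\ge\cdots$, each $\nu_i$ is continuous and strictly decreasing with $\nu_i(0)=l_i$; by Proposition~\ref{Luger} exactly the top $n$ curves start out positive, and each such curve crosses zero once, at the point I call $\kappa_i$. Because the curves are ordered and decreasing one gets $\kappa_1\ge\cdots\ge\kappa_n>0$, and these coincide with the square roots of the absolute values of the negative eigenvalues.

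The two bounds then come from sandwiching $L(\cdot,\au)$ between scalar perturbations of $L(0,\au)$ and applying the comparison principle for such monotone families: if $L_-(\kappa)\le L(\kappa,\au)\le L_+(\kappa)$ for all $\kappa$, with $L_\pm$ monotone decreasing, then by the min-max characterisation the ordered eigenvalue curves satisfy the same pointwise inequalities, and since all three families decrease their zero-crossings obey $\kappa_i(L_-)\le\kappa_i\le\kappa_i(L_+)$ (this is the operator-pencil variational principle used in \cite{AL2009,ML2004}). For the lower bound I would note that the smallest diagonal entry of $D(\kappa,\au)-D(0,\au)$ is $-\kappa$ (the external block), so $M(\kappa,\au)-M(0,\au)\ge-\kappa\mathds{1}$ and hence $L(\kappa,\au)\ge L(0,\au)-\kappa\mathds{1}=:L_-(\kappa)$, whose $i$-th curve $l_i-\kappa$ vanishes at $\kappa=l_i$; this yields $\kappa_i\ge l_i$. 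For the upper bound the key scalar facts are $\kappa/\tanh(\kappa a/2)-2/a\le\kappa$ and $\kappa/\tanh(\kappa a/2)-2/a\le\kappa\tanh(\kappa a'/2)$ for $a'\ge a$, together with the strict monotonicity of $a\mapsto\kappa/\tanh(\kappa a/2)-2/a$ (all elementary, reducing to estimates such as $\sinh(2t)\ge 2t$). These say that every diagonal entry of $D(\kappa,\au)-D(0,\au)$ is bounded above by the $\mu$-block value at $a_{\min}$, i.e. by $-(\kappa/\tanh(\kappa a_{\min}/2)-2/a_{\min})$; hence $L(\kappa,\au)\le L(0,\au)-(\kappa/\tanh(\kappa a_{\min}/2)-2/a_{\min})\mathds{1}=:L_+(\kappa)$, whose $i$-th curve vanishes exactly at $\kappa=\eta(l_i,a_{\min})$, giving $\kappa_i\le\eta(l_i,a_{\min})$.

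For the optimality statement I would evaluate the majorant at $\kappa^*=\eta(l_1,a_{\min})$, where $L_+(\kappa^*)=L(0,\au)-l_1\mathds{1}\le 0$ has top eigenvalue $0$ with eigenspace $\Ker(L(0,\au)-l_1)$. Since $L(\kappa^*,\au)\le L_+(\kappa^*)\le 0$, the bound $\kappa_1\le\kappa^*$ is attained iff $\nu_1(\kappa^*)=0$, i.e. iff there is $v\ne 0$ with $\langle v,L(\kappa^*,\au)v\rangle=0$. The squeeze $0=\langle v,L(\kappa^*,\au)v\rangle\le\langle v,L_+(\kappa^*)v\rangle\le 0$ forces both $v\in\Ker(L(0,\au)-l_1)$ and $\langle v,(L_+(\kappa^*)-L(\kappa^*,\au))v\rangle=0$; writing this last difference, on $\Ran P^{\perp}$, as $-Q\,[\,(\kappa^*/\tanh(\kappa^* a_{\min}/2)-2/a_{\min})\mathds{1}+(D(\kappa^*,\au)-D(0,\au))\,]Q$ and using that the bracket is diagonal and strictly negative off the $\mu$-block entries with $a_i=a_{\min}$ (here the strict scalar inequalities and strict monotonicity in $a$ enter), this happens exactly when $Qv$ is supported on those $\mu$-entries, i.e. $v\in\mathrm{span}\{(0,e_i,-e_i)^{\top}\mid a_i=a_{\min}\}$. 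Thus equality holds iff $\mathrm{span}\{(0,e_i,-e_i)^{\top}\mid a_i=a_{\min}\}\cap\Ker(L(0,\au)-l_1)\ne\{0\}$, which is the claim; the converse direction is the same computation read backwards.

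I expect the main obstacle to be twofold: setting up the comparison principle for the nonlinear (operator-pencil) eigenvalues so that the zero-crossings are correctly indexed against the ordered eigenvalue curves, and, in the optimality part, the exact identification of the contact subspace on which the majorant touches $L(\cdot,\au)$. The latter is the delicate point, since it requires the three scalar comparison inequalities to be \emph{strict} everywhere except precisely on the $\mu$-block directions of minimal length; this is where the strict monotonicity of $a\mapsto\kappa/\tanh(\kappa a/2)-2/a$ and the strict estimate $\sinh(2t)>2t$ are essential.
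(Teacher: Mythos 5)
Your proposal is correct, and its backbone is the same as the paper's: you squeeze $L(\cdot,\au)$ between the two pencils $L(0,\au)-\kappa\,\mathds{1}$ and $L(0,\au)-\bigl(\kappa/\tanh(\kappa a_{\min}/2)-2/a_{\min}\bigr)\mathds{1}$ (the paper's $L_2$ and $L_1$), using exactly the hyperbolic inequalities of the paper's Lemma~\ref{UG}, and you read off the crossings $l_i$ and $\eta(l_i,a_{\min})$. You deviate in two ways, both legitimate and both arguably improvements in this finite-dimensional setting. First, where the paper invokes the Eschw\'e--Langer comparison result (Theorem~\ref{CVP}, quoted from \cite{ML2004}), you re-derive the needed special case directly: ordered eigenvalue curves of a continuous, strictly form-decreasing Hermitian family are continuous, strictly decreasing, inherit pointwise inequalities by min--max, and hence their zero crossings are ordered accordingly; since all three pencils agree at $\kappa=0$, the indexing of the $n$ crossings is consistent, and this makes the proof self-contained. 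Second, for optimality the paper argues through the generalized Rayleigh functional $\rho$ of Theorem~\ref{VP} (maximality of $\rho(x)$ for one direction, then separate contradiction arguments for the converse, including the intermediate claim $\mathfrak{l}_Q[x](0)=l_1$), whereas you evaluate everything at the single contact point $\kappa^*=\eta(l_1,a_{\min})$, where $L(\kappa^*,\au)\le L_+(\kappa^*)=L(0,\au)-l_1\,\mathds{1}\le 0$, and obtain both directions from one squeeze: attainment is equivalent to $\langle v,L(\kappa^*,\au)v\rangle=0$ for some $v\neq0$, which holds if and only if $v\in\Ker(L(0,\au)-l_1)$ and $Qv$ is supported on the $\mu$-entries with $a_i=a_{\min}$, because the diagonal matrix $\bigl(\kappa^*/\tanh(\kappa^* a_{\min}/2)-2/a_{\min}\bigr)\mathds{1}+D(\kappa^*,\au)-D(0,\au)$ is negative semidefinite with kernel exactly those entries; this is precisely where the strictness statements in Lemma~\ref{UG} enter, as you correctly flag. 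Two small points you should make explicit in a write-up: the minorant step needs the full inequality chain \eqref{UG3}--\eqref{UG4} (the $\mu$-block entries $-\kappa/\tanh(\kappa a_i/2)+2/a_i$ must be checked to lie above $-\kappa$; it is not just inspection of the external block), and the $\lambda$-block entries of the contact-point matrix must be verified to be strictly negative for all $a_i$, not only for $a_i>a_{\min}$, which follows from \eqref{UG1} combined with the strict form of \eqref{UG4}.
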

The proof of the above theorem is based on certain estimates on the operator valued function $L(\cdot,\au)$. Other types of estimates on $L(\cdot,\au)$ deliver similar theorems. The proofs are given in the next but one section along with a discussion of the variational methods used. 

Denote by 
$$m_1 \geq \ldots \geq m_n>0$$ 
the $n$ largest positive eigenvalues of $L$ (counted with multiplicities) and note that $m_1=\norm{L_+}$. For $m>0$ the equation 
\begin{eqnarray*}
m = \kappa \tanh(\kappa a/2)
\end{eqnarray*}
has a unique non--trivial solution $\nu(m,a)$.\index{$\nu(m,a)$} Furthermore for $m>\tfrac{2}{a}$ the equation 
\begin{eqnarray*}
m = \frac{\kappa}{\tanh(\kappa a/2)}
\end{eqnarray*}
has the unique non--trivial solution $\eta(m-\tfrac{2}{a},a)$. 

\begin{theorem}\label{theorem2LB}
Let $-\Delta(A,B)$ be self--adjoint. Then the square roots of the absolute values of the negative eigenvalues of $-\Delta(A,B)$ obey the estimate 
\begin{eqnarray*}
0<\kappa_i \leq \nu(m_i,a_{\min}), & 1 \leq i \leq n.
\end{eqnarray*}
The lower bound on the spectrum 
$$-\nu(m_1,a_{\min})^2 \leq -\Delta(A,B)$$ 
is optimal if and only if 
\begin{eqnarray*}
\mbox{span} \left\{ \begin{bmatrix} 0 \\ e_i \\ e_i \end{bmatrix} \Bigg\vert a_i=a_{\min} \right\} \cap \Ker(L-m_1) \neq \{0\},
\end{eqnarray*} 
where again $e_i$ denotes the $i$--th canonical basis vector in $\Ke_{\Ie}^+$ and $\Ke_{\Ie}^-$, respectively, and $m_1=\norm{L_+}$ is the largest positive eigenvalue of $L$.
\\
Whenever $m_i-\tfrac{2}{a_{\min}}>0$, the number $\kappa_i$ satisfies the lower bound
\begin{eqnarray*}
\eta\left(m_i-\tfrac{2}{a_{\min}},a_{\min}\right) \leq \kappa_i.
\end{eqnarray*}
\end{theorem}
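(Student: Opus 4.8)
The plan is to reduce the whole theorem to a scalar analysis of the eigenvalue branches of the Hermitian family $\kappa\mapsto L(\kappa,\au)$ on $\Ran P^{\perp}$. Since every diagonal entry of $D(\kappa,\au)$, namely $-\kappa$, $-\kappa\tanh(\kappa a_i/2)$ and $-\kappa/\tanh(\kappa a_i/2)$, is strictly decreasing in $\kappa$, the family $M(\kappa,\au)=QD(\kappa,\au)Q$, and hence $L(\kappa,\au)=L+P^{\perp}M(\kappa,\au)P^{\perp}$, is strictly decreasing in the sense of quadratic forms. Writing $\lambda_1(\kappa)\ge\cdots\ge\lambda_N(\kappa)$ ($N=\dim\Ran P^{\perp}$) for its eigenvalues, each branch is continuous and strictly decreasing, with $\lambda_i(0)=l_i$ for $1\le i\le n$ and $\lambda_i(\kappa)\to-\infty$. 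Because $-\kappa^2$ is an eigenvalue of $-\Delta(A,B)$ with multiplicity $\dim\Ker L(\kappa,\au)$, the numbers $\kappa_1\ge\cdots\ge\kappa_n$ are exactly the unique zeros of the first $n$ branches, $\lambda_i(\kappa_i)=0$. The theorem then rests on one comparison principle: if $\widehat L(\kappa)$ is another decreasing family with $\widehat L(\kappa)\ge L(\kappa,\au)$ (resp.\ $\le$) for all $\kappa>0$, then by min--max its $i$-th branch dominates $\lambda_i$, so its $i$-th zero $\widehat\kappa_i$ satisfies $\widehat\kappa_i\ge\kappa_i$ (resp.\ $\widehat\kappa_i\le\kappa_i$).

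For the upper bound I would note that the largest eigenvalue of $M(\kappa,\au)$ is $-\kappa\tanh(\kappa a_{\min}/2)$ (all other eigenvalues $-\kappa$ and $-\kappa/\tanh(\kappa a_j/2)$ are smaller, as $\tanh<1$), hence $M(\kappa,\au)\le -\kappa\tanh(\kappa a_{\min}/2)\,\mathds{1}$ and
\[ L(\kappa,\au)\ \le\ \overline L(\kappa):=L-\kappa\tanh(\kappa a_{\min}/2)\,P^{\perp}. \]
On $\Ran P^{\perp}$ this is $L$ shifted by a scalar, so its $i$-th branch is $m_i-\kappa\tanh(\kappa a_{\min}/2)$, whose zero is precisely $\nu(m_i,a_{\min})$; the comparison principle gives $\kappa_i\le\nu(m_i,a_{\min})$. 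Dually, the smallest eigenvalue of $M(\kappa,\au)$ is $-\kappa/\tanh(\kappa a_{\min}/2)$, whence $M(\kappa,\au)\ge -\kappa/\tanh(\kappa a_{\min}/2)\,\mathds{1}$ and $L(\kappa,\au)\ge\underline L(\kappa):=L-\kappa/\tanh(\kappa a_{\min}/2)\,P^{\perp}$. Its $i$-th branch $m_i-\kappa/\tanh(\kappa a_{\min}/2)$ has a positive zero exactly when $m_i>2/a_{\min}$, and that zero equals $\eta(m_i-2/a_{\min},a_{\min})$ by the stated identity relating $m=\kappa/\tanh(\kappa a/2)$ to $\eta(m-2/a,a)$; the comparison principle then yields $\eta(m_i-2/a_{\min},a_{\min})\le\kappa_i$, the final lower bound.

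The delicate point, which I expect to be the main obstacle, is the optimality characterisation $\kappa_1=\nu(m_1,a_{\min})$. Set $\kappa^{*}=\nu(m_1,a_{\min})$, so that the top branch of $\overline L(\kappa^{*})$ vanishes: $m_1-\kappa^{*}\tanh(\kappa^{*}a_{\min}/2)=0$. Since $\kappa_1\le\kappa^{*}$ always, optimality means $\lambda_1(\kappa^{*})=0$, i.e.\ the top eigenvalues of $L(\kappa^{*},\au)\le\overline L(\kappa^{*})$ coincide. I would invoke the elementary fact that for self--adjoint $A\le B$ with $\lambda_1(A)=\lambda_1(B)$ any top eigenvector $v$ of $B$ satisfies $\langle v,(B-A)v\rangle=0$, hence $(B-A)v=0$ as $B-A\ge0$. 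Here $B-A=P^{\perp}\bigl(-\kappa^{*}\tanh(\kappa^{*}a_{\min}/2)\,\mathds{1}-M(\kappa^{*},\au)\bigr)P^{\perp}\ge0$ with $v\in\Ran P^{\perp}$, so $\langle v,(B-A)v\rangle=0$ forces $M(\kappa^{*},\au)v=-\kappa^{*}\tanh(\kappa^{*}a_{\min}/2)\,v$, i.e.\ $v$ lies in the top eigenspace of $M(\kappa^{*},\au)$. A short computation with $M=QDQ$ shows this eigenspace is exactly $\Span\{[0,e_i,e_i]^{\top}\mid a_i=a_{\min}\}$, since the values $-\kappa$ and $-\kappa/\tanh(\kappa a_j/2)$ never equal $-\kappa\tanh(\kappa a_{\min}/2)$ and no extra degeneracy occurs. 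Simultaneously $\overline L(\kappa^{*})v=0$ gives $Lv=m_1v$, so $v\in\Ker(L-m_1)$, and the intersection in the theorem is nontrivial. Conversely, any nonzero $v$ in that intersection is automatically in $\Ran P^{\perp}$ (being an eigenvector of $L$) and satisfies $L(\kappa^{*},\au)v=\bigl(m_1-\kappa^{*}\tanh(\kappa^{*}a_{\min}/2)\bigr)v=0$; thus $\kappa^{*}$ is one of the $\kappa_j\le\kappa_1\le\kappa^{*}$, forcing $\kappa_1=\kappa^{*}$. The care needed in extracting the common eigenvector and pinning its location to the prescribed span, using the strict monotonicity of $a\mapsto\tanh(\kappa a/2)$ and the separation of the three blocks of $D(\kappa,\au)$, is where the real work lies.
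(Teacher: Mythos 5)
Your treatment of the two bounds is correct and follows essentially the paper's route: your comparison operators $\overline L(\kappa)=L-\kappa\tanh(\kappa a_{\min}/2)P^{\perp}$ and $\underline L(\kappa)=L-\kappa/\tanh(\kappa a_{\min}/2)\,P^{\perp}$ are exactly the paper's $L_3(\kappa,a_{\min})$ and $L_4(\kappa,a_{\min})$ (up to conjugation by the symmetry $Q$), and the form inequalities you use on $M(\kappa,\au)$ are the content of Lemma~\ref{UG}. The only methodological difference is that you derive the comparison of zeros directly from the finite--dimensional min--max theorem applied to the ordered eigenvalue branches $\lambda_i(\kappa)$, where the paper cites the variational principles for operator functions (Theorems~\ref{VP} and~\ref{CVP}); your self--contained version is legitimate, since strict form--monotonicity gives strictly decreasing branches, the zeros of the top $n$ branches are automatically ordered, and they coincide with the $\kappa_i$ because $\dim\Ker L(\kappa,\au)$ is the multiplicity of $-\kappa^2$.

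The optimality characterisation, however, rests on a lemma that is false as you state it. You claim: for self--adjoint $A\le B$ with $\lambda_1(A)=\lambda_1(B)$, \emph{any} top eigenvector $v$ of $B$ satisfies $\langle v,(B-A)v\rangle=0$. Counterexample: $A=\diag(1,0)$, $B=\mathds{1}$, $v=e_2$; then $\lambda_1(A)=\lambda_1(B)=1$ but $\langle v,(B-A)v\rangle=1$. The rigidity goes the other way: any top eigenvector $v$ of the \emph{smaller} operator $A$ satisfies $\lambda_1(A)=\langle v,Av\rangle\le\langle v,Bv\rangle\le\lambda_1(B)=\lambda_1(A)$, hence $v$ is also a top eigenvector of $B$ and $(B-A)v=0$ since $B-A\ge0$. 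This matters in your application because the top eigenspace of $B=\overline L(\kappa^{*})$ is $\Ker(L-m_1)$, which may be degenerate, in which case not every vector in it lies in $\Ker(B-A)$. Fortunately the fix is one line: optimality gives $\lambda_1(\kappa^{*})=0$, so $A=L(\kappa^{*},\au)$ has a top eigenvector $v$ with $Av=0$; the corrected lemma then yields $Lv=m_1v$ and $\langle v,Cv\rangle=0$ for $C=-\kappa^{*}\tanh(\kappa^{*}a_{\min}/2)\mathds{1}-M(\kappa^{*},\au)\ge0$ on all of $\Ke$, whence $Cv=0$ (this part of your argument, passing from the compression $P^{\perp}CP^{\perp}$ to $Cv=0$, is sound). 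Your identification of the top eigenspace of $M(\kappa^{*},\au)$ with $\Span\left\{[0,e_i,e_i]^{\top}\mid a_i=a_{\min}\right\}$ and your converse direction are correct. With this repair the proof is complete, and your rigidity argument is in fact a different, arguably cleaner, route to optimality than the paper's, which argues through the generalized Rayleigh functional of Theorem~\ref{VP}.
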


\begin{remark}\label{Bspbolte}
For the smallest negative eigenvalue one re--obtains from Theorem~\ref{theorem2LB} the lower bound given in \cite[Theorem 3.10]{VKRS2006}, which in the above notation is 
$$-\nu(m_1,a_{\min})^2 \leq -\Delta(A,B).$$  
The optimality of this bound has been shown in \cite[Remark 4.1]{Bolte2009}, by means of the example $L=\mathds{1}$ and $P=0$ on a compact graph. An explicit computation of the negative eigenvalues of $-\Delta(L,\mathds{1})$ exhibits the optimality for this example. Applying Theorem~\ref{theorem2LB} shows an easier way. It is sufficient to notice that for any $i\in \Ie$
\begin{eqnarray*}
v_i=\begin{bmatrix} e_i \\ e_i\end{bmatrix} \in \Ker(L-1),
\end{eqnarray*}
where $1$ is the largest positive eigenvalue of $L=\mathds{1}$. Considering $-\Delta(\mathds{1},\mathds{1})$ on the interval $[0,a]$ one reads from
\begin{eqnarray*}
L(k,\au)=  Q  \begin{bmatrix} 1-\kappa \tanh(\kappa a /2) & 0 \\ 0 & 1-\frac{\kappa}{\tanh(\kappa a /2)} \end{bmatrix}Q
\end{eqnarray*}
that if $1-\tfrac{2}{a}>0$, one has a second negative eigenvalue $-\eta\left(1-\tfrac{2}{a},a\right)^2$. Theorem~\ref{theorem1LB} gives the lower bound $-\eta\left(1,a\right)^2<-\nu(1,a)^2 $ and therefore less accurate information on the bottom of the spectrum.
\end{remark}

An example for the optimality of the lower bound given in Theorem~\ref{theorem1LB} is 
\begin{example}\label{Bspn2}
Consider on the interval $[0,a]$ the operator $-\Delta(L_c,\mathds{1})$ with the non--local boundary conditions that are defined by 
\begin{eqnarray*}
L_c= \frac{c}{2} \cdot \begin{bmatrix} +1 & -1 \\ -1 & +1 \end{bmatrix},
\end{eqnarray*}
where one has 
\begin{eqnarray*}
L_c=  Q\begin{bmatrix} 0 & 0 \\ 0 & c \end{bmatrix}Q &\mbox{and hence } & L_c(\kappa,a)=Q  \begin{bmatrix} -\kappa \tanh(\kappa a/2) & 0 \\ 0 & c-\frac{\kappa}{\tanh(\kappa a/2)} \end{bmatrix}Q.
\end{eqnarray*}
As long as $c - \tfrac{2}{a}>0$ holds, there exists exactly one negative eigenvalue of $-\Delta(L_c,\mathds{1})$, which is the solution of 
$$\frac{\kappa}{\tanh(\kappa a/2)}=c.$$ The lower bound given in Theorem~\ref{theorem1LB} is optimal, whereas the lower bound of Theorem~\ref{theorem2LB} predicts a smaller lower bound below zero, even in the case when the operator $-\Delta(L_c,\mathds{1})$ is non--negative. Therefore Theorem~\ref{theorem2LB} provides less information in this case.
\end{example}

\begin{remark}
The solutions of the transcendent equations given in Theorem~\ref{theorem1LB} and Theorem~\ref{theorem2LB} are in general only implicitly given. However they can be majorized and minorized by piecewise algebraic functions. Note that 
\begin{eqnarray*}
\tanh(y) \geq \begin{cases} \frac{1}{4} y, &  \mbox{if} \ 0\leq y\leq 1, \\ \frac{1}{4} , & \mbox{if} \  y\geq 1 \end{cases}
\end{eqnarray*}
which yields
\begin{eqnarray*}
- \kappa\tanh\left(\frac{a}{2}\kappa\right) \leq s(\kappa,a):= -\begin{cases} \frac{1}{4} \frac{a}{2}\kappa^2, &  0 \leq \kappa\leq \frac{2}{a}, \\ \frac{1}{4} \kappa, &  \kappa\geq \frac{2}{a} \end{cases}
\end{eqnarray*}
and hence $m_1-\tanh(\kappa a_{\min}/2) \leq m_1 + s(\kappa,a_{\min})$. The unique solution of the equation $m + s(\kappa,a)=0$ for $m>0$ is 
\begin{eqnarray*}
\xi(m,a)= \begin{cases} \sqrt{\frac{8 m}{a}}, &  m\leq \frac{1}{2a}, \\ 4m, &  m\geq \frac{1}{2a}. \end{cases}
\end{eqnarray*}
Taking advantage of the monotony of the functions involved one concludes that the unique solution $\xi(m_1,a_{\min})$ of $m_1 + s(\kappa,a_{\min})=0$ is larger than the solution $\nu(m_1,a_{\min})$ of the equation $m_1-\tanh(\kappa a_{\min}/2)=0$. Thus $-\xi(m_1,a_{\min})^2\leq -\Delta(A,B)$ holds and hence also 
\begin{align*}
 -\Delta(A,B) \geq -\xi(m_1,a_{\min})^2 \geq 
\begin{cases}
-4\frac{m_1}{a_{\min}}(\abs{\Ee}+\abs{\Ie}), & m_1\leq \frac{1}{2 a_{\min}}, \\
-8 m_1^2(\abs{\Ee}+\abs{\Ie}) , & m_1\geq \frac{1}{2 a_{\min}}
\end{cases}
\end{align*} 
holds for $\abs{\Ee}+\abs{\Ie}\geq 2$. This is the lower bound on the spectrum given in \cite[Corollary 10]{PKQG1}. It has been proven there using quadratic forms associated with self--adjoint Laplace operators. The proof given here exhibits that this lower bound can be re--obtained from the bound given in \cite[Theorem 3.10]{VKRS2006}. More precisely the bound in \cite[Corollary 10]{PKQG1} results from a reduction of the bound given in \cite[Theorem 3.10]{VKRS2006}.   
\end{remark}

The bounds given in Theorem~\ref{theorem1LB} and in Theorem~\ref{theorem2LB} can be coarsened in order to obtain estimates in terms of affine linear functions. Consider the linear operator 
\begin{eqnarray*}
R(\kappa,\au)= L+ P^{\perp} M_1(\au) P^{\perp}- \kappa P^{\perp}, & \mbox{where } M_1(\au)=\begin{bmatrix} 0 & 0 & 0 \\ 0 & \frac{1}{\au} & \frac{1}{\au}  \\ 0 & \frac{1}{\au} & \frac{1}{\au} \end{bmatrix}
\end{eqnarray*}
as an operator in the space $\ran P^{\perp}$. Since $M(0,\au)\leq 0$ and $0\leq M_1(\au)$ in the sense of quadratic forms one has  
$$ L(0,\au) \leq  L \leq R(0,\au). $$ 
Denote by $r_1 \geq \ldots \geq r_n>0$ the $n$ largest positive eigenvalues of $R(0,\au)$ (counted with multiplicities).  
\begin{theorem}\label{theorem3LB}
Let $-\Delta(A,B)$ be self--adjoint. Then the square roots of the absolute values of the negative eigenvalues of $-\Delta(A,B)$ obey the two--sided estimates
\begin{eqnarray*}
l_i \leq \kappa_i \leq r_i, & \mbox{for }  1\leq i\leq n.
\end{eqnarray*}
\end{theorem}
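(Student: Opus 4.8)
The plan is to realize the numbers $\kappa_i$ as the zero--crossings of the eigenvalue curves of the monotone operator family $\kappa\mapsto L(\kappa,\au)$ and then to sandwich these curves between two \emph{affine} families whose crossings are exactly the $l_i$ and the $r_i$. First I would record that $L(\cdot,\au)$ is non--increasing in the sense of quadratic forms: in the diagonalising basis $Q$ the family is $L+P^{\perp}QD(\kappa,\au)QP^{\perp}$, and each diagonal entry $-\kappa$, $\lambda(\kappa,\au)=-\kappa\tanh(\kappa\au/2)$ and $\mu(\kappa,\au)=-\kappa/\tanh(\kappa\au/2)$ is decreasing in $\kappa$ (for $\mu$ this uses $\sinh(2y)\geq 2y$). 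Together with Proposition~\ref{Luger} this yields the variational principle underlying the argument: for every $t\geq 0$ the number of eigenvalues $\kappa_j>t$ equals the number $N_+(L(t,\au))$ of positive eigenvalues of $L(t,\au)$ in $\Ran P^{\perp}$, because as $\kappa$ passes $t$ the count $N_+(L(\kappa,\au))$ drops by the multiplicity of each $\kappa_j$ crossed, starting from $N_+(L(0,\au))=n$.

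The decisive step is the pair of form inequalities
\[
L(0,\au)-\kappa P^{\perp}\ \leq\ L(\kappa,\au)\ \leq\ R(0,\au)-\kappa P^{\perp}\qquad(\kappa\geq 0)
\]
as operators in $\Ran P^{\perp}$, equivalently $M(0,\au)-\kappa\leq M(\kappa,\au)\leq M_1(\au)-\kappa$. Since $Q$ diagonalises all three of $M(0,\au)$, $M(\kappa,\au)$ and $M_1(\au)$ simultaneously (one computes $QM_1(\au)Q=\diag(0,\tfrac{2}{\au},0)$ and $QM(0,\au)Q=\diag(0,0,-\tfrac{2}{\au})$), these reduce to scalar comparisons of the diagonal entries. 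Writing $y=\kappa a/2$, the only non--trivial ones become $\tfrac{2y}{e^{2y}-1}\leq 1$ for the lower bound and $\tfrac{2y}{e^{2y}+1}\leq 1$ for the upper bound, both immediate from $e^{2y}\geq 1+2y$.

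Finally I would convert the form ordering into the eigenvalue estimate by the min--max principle. Form domination gives $N_+(L(0,\au)-tP^{\perp})\leq N_+(L(t,\au))\leq N_+(R(0,\au)-tP^{\perp})$ for all $t\geq 0$. As the two bounding families are affine, $R(0,\au)-tP^{\perp}$ acts as $R(0,\au)-t$ on $\Ran P^{\perp}$, whence $N_+(R(0,\au)-tP^{\perp})=\#\{j:r_j>t\}$ and likewise $N_+(L(0,\au)-tP^{\perp})=\#\{j:l_j>t\}$. Combined with the identity $\#\{j:\kappa_j>t\}=N_+(L(t,\au))$ this yields $\#\{j:l_j>t\}\leq\#\{j:\kappa_j>t\}\leq\#\{j:r_j>t\}$ for every $t\geq 0$, and comparing the counting functions of these decreasingly ordered sequences gives $l_i\leq\kappa_i\leq r_i$ for $1\leq i\leq n$ (the lower bound reproducing Theorem~\ref{theorem1LB}).

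The main obstacle is the variational identity of the first paragraph: making rigorous that the eigenvalues $\kappa_i$ of $-\Delta(A,B)$ coincide with the zero--crossings of the eigenvalue curves of $L(\cdot,\au)$, with matching multiplicities. This is exactly the content of the monotone--family variational method of \cite{AL2009,ML2004}; once it is in place, the remaining steps are the elementary form comparison and a routine min--max argument.
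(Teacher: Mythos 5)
Your proposal is correct and takes essentially the same route as the paper: you sandwich $L(\kappa,\au)$ between the same two affine comparison families $L(0,\au)-\kappa P^{\perp}$ and $R(0,\au)-\kappa P^{\perp}=R(\kappa,\au)$, using scalar inequalities equivalent to the paper's $y-1\leq y\tanh(y)$ and \eqref{UG3}. The only presentational difference is that where the paper invokes Theorem~\ref{CVP} (cited from Eschw\'e--Langer) together with Lemma~\ref{lemma1LB}, you re-derive that comparison principle by hand via the counting functions $N_+(\cdot)$ and zero-crossings of the eigenvalue curves, which is exactly the mechanism underlying Theorems~\ref{VP} and~\ref{CVP}, so no genuine gap remains.
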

Both estimates from below and from above are optimal for star graphs, that is for graphs with $\Ie=\emptyset$, since then $R(\kappa,\au)=L(\kappa,\au)=L-\kappa P^{\perp}$ holds. The estimates given in Theorem~\ref{theorem3LB} are compared to the ones given in Theorems~\ref{theorem1LB} and \ref{theorem2LB} easy to compute.

When $a_i \to \infty $, uniformly for all $i\in \Ie$ the lower and upper bounds obtained in \ref{theorem3LB} converge from below and from above to the positive eigenvalues of $L$. This follows from 
\begin{eqnarray*}
\lim_{a_{\min}\to\infty} M(0,\au) =0 & \mbox{and} & \lim_{a_{\min}\to\infty} M_1(\au) =0
\end{eqnarray*}
Hence the bounds are improving for large internal edge lengths. In the limit the negative eigenvalues behave like on the disconnected graph on which each internal edge has been replaced by two external edges.

\begin{example}
Consider the graph that consists of two external edges $\Ee=\{1,2\}$ connected by an internal edge $I=\{3\}$ of length $\au=\{a\}$. This means one has two vertices $\partial (1)=\partial_-(3)$ and $\partial (2)=\partial_+(3)$ each of degree two. On each vertex one imposes $\delta^{\prime}$--interactions with coupling parameter $\gamma\neq 0$, compare \cite[Section 3.2.1]{PKQG1}. These are locally given by the boundary conditions that are defined by 
\begin{eqnarray*}
A_{\nu}=
\begin{bmatrix} 
0 & 0 \\ 1 & 1
\end{bmatrix}
 & \mbox{and}& B_{\nu}= 
\begin{bmatrix} 
1 & -1 \\  -\gamma & 0 
\end{bmatrix}.
\end{eqnarray*}
This gives 
\begin{eqnarray*}
L= \displaystyle{-\frac{1}{\gamma}}
\begin{bmatrix} 
1 & 1 & 0 & 0 \\
1 & 1 & 0 & 0 \\
0 & 0 & 1 & 1 \\
0 & 0 & 1 & 1 
\end{bmatrix} & \mbox{and} & P=0,
\end{eqnarray*}
and 
\begin{eqnarray*}
M(0,\au)= \displaystyle{\frac{1}{a}}
\begin{bmatrix} 
0 & 0 & 0 & 0 \\
0 & -1 & 1 & 0 \\
0 & 1 & -1 & 0 \\
0 & 0 & 0 & 0 
\end{bmatrix}
& \mbox{and} &
M_1(\au)= \displaystyle{\frac{1}{a}}
\begin{bmatrix} 
0 & 0 & 0 & 0 \\
0 & 1 & 1 & 0 \\
0 & 1 & 1 & 0 \\
0 & 0 & 0 & 0 
\end{bmatrix}.
\end{eqnarray*}
Consider the operator $-\Delta(L,\mathds{1})$. The eigenvalues of $L$ are $0$ and $-\tfrac{2}{\gamma}$. The eigenvalues of $L(0,\au)= L+M(0,\au)$ are 
\begin{eqnarray*}
0, \frac{-a-\gamma+ \sqrt{a^2+\gamma^2}}{a\gamma}, -\frac{a+\gamma+ \sqrt{a^2+\gamma^2}}{a\gamma}  & \mbox{and} \ \displaystyle{-\frac{2}{\gamma}}. 
\end{eqnarray*}
Assume for simplicity from now on that $\gamma<0$. Then there are two positive eigenvalues
\begin{eqnarray*}
\displaystyle{-\frac{2}{\gamma}} & \mbox{and} &  \displaystyle{-\frac{a+\gamma+ \sqrt{a^2+\gamma^2}}{a\gamma}}
\end{eqnarray*}
of $L(0,\au)$ and therefore by Proposition~\ref{Luger} there are two negative eigenvalues $-\kappa_1^2$ and $-\kappa_2^2$ of $-\Delta(L,\mathds{1})$. The eigenvalues of $R(0,\au)= L+M_1(\au)$ are 
\begin{eqnarray*}
0, \frac{-a+\gamma+ \sqrt{a^2+\gamma^2}}{a\gamma}, -\frac{a-\gamma+ \sqrt{a^2+\gamma^2}}{a\gamma}  & \mbox{and} \ \displaystyle{-\frac{2}{\gamma}}. 
\end{eqnarray*}
For $\gamma<0$ there are two positive eigenvalues
\begin{eqnarray*}
\displaystyle{-\frac{2}{\gamma}} & \mbox{and} & \displaystyle{ -\frac{a-\gamma+ \sqrt{a^2+\gamma^2}}{a\gamma}} 
\end{eqnarray*}
of $R(0,\au)$. Hence by  Theorem~\ref{theorem3LB} 
\begin{eqnarray*}
\displaystyle{-\frac{a+\gamma+ \sqrt{a^2+\gamma^2}}{a\gamma} \leq \kappa_2 \leq -\frac{2}{\gamma} } & \mbox{and }  \displaystyle{ -\frac{2}{\gamma}  \leq \kappa_1 \leq -\frac{a-\gamma+ \sqrt{a^2+\gamma^2}}{a\gamma}}. 
\end{eqnarray*}
As already remarked for $a\to \infty$ the negative eigenvalues resemble the behaviour of the negative eigenvalues of an operator on a disjoint union of star graphs. Here this means that
\begin{eqnarray*}
\displaystyle{\lim_{a\to\infty}-\kappa_2^2=-\tfrac{4}{\gamma^2}} & \mbox{and } \displaystyle{ \lim_{a\to\infty}-\kappa_1^2=-\tfrac{4}{\gamma^2}}. 
\end{eqnarray*}
For small edge length the behaviour is more complicated. Since 
\begin{eqnarray*}
\displaystyle{\lim_{a\to 0}-\frac{a+\gamma+ \sqrt{a^2+\gamma^2}}{a\gamma}} = -\tfrac{2}{\gamma}  & \mbox{and} & \displaystyle{\lim_{a\to 0}-\frac{a-\gamma+ \sqrt{a^2+\gamma^2}}{a\gamma} = \infty }
\end{eqnarray*}
one has on the one hand that 
\begin{eqnarray*}
\displaystyle{\lim_{a\to 0}-\kappa_2^2=-\tfrac{4}{\gamma^2} } & \mbox{and} & \displaystyle{-\kappa_1^2 \leq -\tfrac{4}{\gamma^2} },
\end{eqnarray*}
but on the other hand it is not clear whether $-\kappa_1^2$ is bounded from below for $a\to 0$. A direct computation gives further information on $-\kappa_1^2$. Consider
\begin{eqnarray*}
\mathfrak{l}(\kappa)[x]= \langle L(\kappa,\au)x,x \rangle & \mbox{with} & x= \begin{bmatrix} 0 \\ 1 \\ 1 \\ 0 \end{bmatrix},
\end{eqnarray*}
which gives
\begin{eqnarray*}
\mathfrak{l}(\kappa)[x]= -\frac{1}{\gamma} - \kappa \tanh(a\kappa/2).
\end{eqnarray*}
The solution of $-\tfrac{1}{\gamma} - \kappa \tanh(a\kappa/2)=0$ is $\nu(-\gamma^{-1},a)$, which goes to infinity for $a\to 0$. From the variational characterization of the singular points of $L(\cdot,\au)$ in the forthcoming Theorem~\ref{VP} it follows that $\nu(-\gamma^{-1},a)\leq \kappa_1$ an therefore
$$\lim_{a\to 0}\kappa_1=\infty.$$
\end{example}

\begin{remark}
Since the self--adjoint operators $-\Delta(A,B)$ are self--adjoint and semi--bounded the operators $\Delta(A,B)$ generate quasi--contractive semigroups,
$$\norm{e^{t \Delta(A,B)}} \leq  e^{\omega t},$$
for appropriate $\omega$. If $-\Delta(A,B)$ has negative spectrum then the best possible choice is $\omega=\kappa_1^2$, that is the growth bound is exactly the absolute value of the smallest negative eigenvalue, compare for example \cite[Corollary II.3.6]{EngelNagel}. The above Theorems~\ref{theorem1LB}, \ref{theorem2LB} and \ref{theorem3LB} provide a priori estimates on this growth bound.   
\end{remark}

\section{Poincar\'{e} type inequalities on compact graphs}\label{secPoicare}
Assume now that $(\Ge,\au)$ is a compact metric graph, that is $\Ee=\emptyset$, and let $-\Delta(A,B)$ be a self-adjoint Laplace operator on this graph. It turns out that the eigenvalue zero of this operator is again related to the operator $L(\cdot,\au)$, which appeared in the study of the negative eigenvalues. 

\begin{proposition}\label{kernelLB}
Let $(\Ge,\au)$ be a compact metric graph and let $-\Delta(A,B)$ be a self--adjoint Laplace operator on the graph. Then zero is an eigenvalue of $-\Delta(A,B)$ if and only if zero is an eigenvalue of the operator $L(0,\au)$ considered as an operator in $\ran P^{\perp}$ and the equality 
$$\dim \Ker \left(-\Delta(A,B)\right)= \dim \Ker L(0,\au)$$
holds. In particular $-\Delta(A,B)$ is strictly positive if and only if $L(0,\au)$ as an operator in $\Ran P^{\perp}$ is strictly negative.  
\end{proposition}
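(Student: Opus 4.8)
The plan is to characterise $\Ker(-\Delta(A,B))$ directly through the zero-energy equation and then to repeat, at $\kappa=0$, the block reduction that in the previous section led from $\det(A+BM(\kappa,\au))=0$ to the non-invertibility of $L(\kappa,\au)$. Since $(\Ge,\au)$ is compact, every $\psi$ with $-\Delta(A,B)\psi=0$ solves $\psi_j^{\prime\prime}=0$ on each edge and is therefore edgewise affine, $\psi_j(x)=\alpha_j+\beta_j x$, $j\in\Ie$; conversely every such function lies in $\De$. Hence $\psi\in\Ker(-\Delta(A,B))$ if and only if $\psi$ is edgewise affine and its boundary data satisfy $A\underline{\psi}+B\underline{\psi^{\prime}}=0$.

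First I would record the boundary data of this Ansatz, namely $\underline{\psi}=\bigl(\{\alpha_i\},\{\alpha_i+a_i\beta_i\}\bigr)$ and $\underline{\psi^{\prime}}=\bigl(\{\beta_i\},\{-\beta_i\}\bigr)$ with respect to $\Ke=\Ke_{\Ie}^-\oplus\Ke_{\Ie}^+$. The assignment $(\alpha,\beta)\mapsto\underline{\psi}$ is a linear bijection onto $\Ke$, with inverse $\alpha_i=\psi_i(0)$ and $\beta_i=(\psi_i(a_i)-\psi_i(0))/a_i$; in particular the whole function $\psi$ is recovered from $\underline{\psi}$ alone. The key computation is the identity $\underline{\psi^{\prime}}=M(0,\au)\underline{\psi}$: inserting the explicit limiting matrix from \eqref{M0LB} (with $\Ee=\emptyset$) one checks blockwise that $M(0,\au)$ sends $\bigl(\alpha,\alpha+\diag(a_i)\beta\bigr)$ to $(\beta,-\beta)$, which is exactly $\underline{\psi^{\prime}}$. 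Conceptually $M(0,\au)$ is here the zero-energy Dirichlet-to-Neumann map, each edge derivative being the divided difference $\beta_i=(\psi_i(a_i)-\psi_i(0))/a_i$.

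With this identity the boundary condition becomes $(A+BM(0,\au))\underline{\psi}=0$, and because $\psi\mapsto\underline{\psi}$ is a bijection onto $\Ke$ that determines $\psi$, I obtain $\dim\Ker(-\Delta(A,B))=\dim\Ker(A+BM(0,\au))$. To pass from $A+BM(0,\au)$ to $L(0,\au)$ I would reuse, now at $\kappa=0$, the block decomposition $\Ke=(\ran P)\oplus(\ran P^{\perp})$ already employed in the text: writing $A=L+P$, $B=P^{\perp}$, the operator $A+BM(0,\au)$ is triangular with the identity on $\ran P$ and with $L(0,\au)=L+P^{\perp}M(0,\au)P^{\perp}$ in the $\ran P^{\perp}$ block, so that $\Ker(A+BM(0,\au))\cong\Ker L(0,\au)$ and the two dimensions coincide. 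This yields both the stated equivalence and the dimension equality. The final ``in particular'' follows by combining this with Proposition~\ref{Luger}: as $-\Delta(A,B)$ is bounded below with discrete spectrum, strict positivity means it has no eigenvalue $\le 0$; by Proposition~\ref{Luger} the absence of negative eigenvalues is equivalent to $L(0,\au)\le 0$, while by the present result the absence of the eigenvalue zero is equivalent to $\Ker L(0,\au)=\{0\}$, and together these say precisely that $L(0,\au)<0$.

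The hard part is that the naive $\kappa\to 0$ limit of the negative-eigenvalue analysis breaks down: the matrix $X(i\kappa,\au)$ becomes singular at $\kappa=0$ because the exponential fundamental system $e^{\pm\kappa x}$ degenerates to the single constant solution, so one cannot merely set $\kappa=0$ in $M(\kappa,\au)=-\kappa\,YX^{-1}$ and transcribe the earlier derivation. The one genuinely new step is therefore to work with the correct fundamental system $\{1,x\}$ and to verify by hand that the relation $\underline{\psi^{\prime}}=M(0,\au)\underline{\psi}$ nevertheless persists at $\kappa=0$ with $M(0,\au)$ the limiting matrix; once this single identity is secured, everything else is the block reduction already established for $\kappa>0$.
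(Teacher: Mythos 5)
Your proof is correct and follows essentially the same route as the paper: both use the piecewise affine Ansatz $\psi_j(x)=\alpha_j+\beta_j x$, reduce the boundary condition to the matrix $A+BM(0,\au)$ (your identity $\underline{\psi^{\prime}}=M(0,\au)\,\underline{\psi}$ is exactly the paper's computation $M(0,\au)=Y_0(\au)X_0(\au)^{-1}$ in disguise), pass to $L(0,\au)$ via the block decomposition $\Ke=(\ran P)\oplus(\ran P^{\perp})$, and obtain the strict-positivity statement by combining with Proposition~\ref{Luger}. The only cosmetic difference is that you track a kernel bijection where the paper phrases the reduction through determinants and invertibility of $X_0(\au)$.
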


As a consequence of Proposition~\ref{kernelLB} one can prove a criterion for having a Poincar\'{e} type inequality on certain subspaces of the Sobolev space $\mathcal{W}$.
\begin{theorem}\label{poincare}
Let $(\Ge,\au)$ be a compact metric graph and let $P$ be an orthogonal projector in $\Ke$. Whenever $\Ker \left(P^{\perp}M(0,\au)P^{\perp}+P\right)= \{0\}$ holds, there exists a constant $C>0$, where $C= C(P,\au)$, such that
$$\norm{u^{\prime}}_{\He} \geq C \norm{u}_{\He}$$
holds for all \index{$\We_P$}
$$u \in \We_P=\{\phi\in \mathcal{W} \mid P\underline{\phi}=0\}. $$
\end{theorem}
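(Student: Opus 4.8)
The plan is to interpret $\norm{u'}_\He$ as the quadratic form of a concrete self-adjoint Laplacian whose form domain is exactly $\We_P$, and then to extract the inequality from the strict positivity granted by Proposition~\ref{kernelLB}. I would take $A=P$ and $B=P^\perp$. Since $(P,P^\perp)\colon\Ke^2\to\Ke$ is surjective and $P(P^\perp)^*=PP^\perp=0$ is self-adjoint, $-\Delta(P,P^\perp)$ is self-adjoint; moreover $\Ker B=\Ran P$ and formula~\eqref{KuchmentL2} gives $L=0$, so here $L(0,\au)=P^\perp M(0,\au)P^\perp$ acting in $\Ran P^\perp$, and the projector attached to $-\Delta(P,P^\perp)$ is the given $P$.

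First I would compute the form. For $\psi$ in the operator domain, integration by parts on each edge yields $\langle-\Delta\psi,\psi\rangle_\He=\norm{\psi'}_\He^2+\langle\underline{\psi'},\underline\psi\rangle_\Ke$. The boundary conditions $P\underline\psi=0$ and $P^\perp\underline{\psi'}=0$ make the boundary term vanish, $\langle\underline{\psi'},\underline\psi\rangle_\Ke=\langle P\underline{\psi'},\underline\psi\rangle+\langle P^\perp\underline{\psi'},\underline\psi\rangle=\langle\underline{\psi'},P\underline\psi\rangle+0=0$, so the form reduces to $\fh[\psi]=\norm{\psi'}_\He^2$. Invoking the standard form representation of these Laplacians, the closed form of $-\Delta(P,P^\perp)$ is $(\fh,\We_P)$ with $\fh[u]=\norm{u'}_\He^2$ and form domain $\We_P=\{u\in\mathcal{W}\mid P\underline u=0\}$.

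Next I would verify the spectral input. As $M(0,\au)\le 0$, the operator $L(0,\au)=P^\perp M(0,\au)P^\perp$ is non-positive, hence has no positive eigenvalues, so by Proposition~\ref{Luger} the operator $-\Delta(P,P^\perp)$ has no negative eigenvalues. With respect to $\Ke=\Ran P\oplus\Ran P^\perp$ the operator $P^\perp M(0,\au)P^\perp+P$ is block-diagonal, acting as $\mathds{1}$ on $\Ran P$ and as $L(0,\au)$ on $\Ran P^\perp$; therefore the assumption $\Ker(P^\perp M(0,\au)P^\perp+P)=\{0\}$ is equivalent to $\Ker L(0,\au)=\{0\}$, i.e.\ to $L(0,\au)$ being strictly negative in $\Ran P^\perp$. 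By Proposition~\ref{kernelLB} this is equivalent to $-\Delta(P,P^\perp)$ being strictly positive, so $0$ is not an eigenvalue. Because the graph is compact, $\mathcal{W}\hookrightarrow\He$ is compact, so $-\Delta(P,P^\perp)$ has purely discrete spectrum; combining non-negativity, absence of the eigenvalue $0$, and discreteness, its lowest eigenvalue $\lambda_1$ is strictly positive. The min--max principle then gives $\lambda_1=\inf\{\norm{u'}_\He^2/\norm{u}_\He^2\mid 0\ne u\in\We_P\}$, and $C:=\sqrt{\lambda_1}$ is the asserted constant.

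The step I expect to be the main obstacle is the identification of the form domain: one must know that the $H^1$-closure of the form $u\mapsto\norm{u'}_\He^2$ on the operator domain is all of $\We_P$, since the inequality is claimed for every $u\in\We_P$. A self-contained route that sidesteps the form representation is a compactness argument. If the inequality failed, there would exist $u_n\in\We_P$ with $\norm{u_n}_\He=1$ and $\norm{u_n'}_\He\to 0$; these are bounded in $\mathcal{W}$, so by the Rellich property a subsequence converges in $\mathcal{W}$ to some $u\in\We_P$ with $u'=0$ and $\norm{u}_\He=1$. Then $u$ is constant on each edge, so its boundary vector satisfies $M(0,\au)\underline u=0$ and $P\underline u=0$, whence $(P^\perp M(0,\au)P^\perp+P)\underline u=0$; by hypothesis $\underline u=0$, forcing $u=0$, which contradicts $\norm{u}_\He=1$.
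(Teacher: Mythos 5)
Your main line of argument coincides with the paper's own proof: the paper likewise takes $A=P$, $B=P^{\perp}$, observes that $L(0,\au)=P^{\perp}M(0,\au)P^{\perp}\leq 0$, uses Proposition~\ref{Luger} to exclude negative eigenvalues, compactness of the graph for discreteness of the spectrum, and Proposition~\ref{kernelLB} for strict positivity of the lowest eigenvalue $\lambda_1$; and at precisely the step you flagged as the main obstacle it cites \cite[Theorem 9]{PKQG1}, which says that $-\Delta(P,P^{\perp})$ is the operator associated with the closed form $\bar{\delta}_{P}[u,v]=\langle u^{\prime},v^{\prime}\rangle$ whose form domain is exactly $\We_P$, so that the operator-domain inequality extends to all of $\We_P$ by form closure. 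Your instinct was correct that this identification is the one genuinely nontrivial input: your integration by parts only yields the inequality on the operator domain and its form closure, and without the cited representation theorem one still needs to know that this closure is all of $\We_P$. Your fallback compactness argument is therefore valuable, and it is in fact a correct, genuinely different and more elementary proof of the whole theorem: it bypasses the representation theorem and the spectral machinery (Propositions~\ref{Luger} and~\ref{kernelLB}) entirely, using only Rellich compactness on the finitely many finite edges (strictly speaking Rellich gives an $\He$-convergent subsequence, which together with $\norm{u_n^{\prime}}_{\He}\to 0$ is Cauchy in $\mathcal{W}$, hence $\mathcal{W}$-convergent), continuity of the trace map $\mathcal{W}\to\Ke$, and the observation that an edgewise constant function $u$ satisfies $M(0,\au)\underline{u}=0$, forcing $\underline{u}\in\Ker\left(P^{\perp}M(0,\au)P^{\perp}+P\right)=\{0\}$ and hence $u=0$. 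The trade-off is quantitative: the paper's route (and your main route) identifies the optimal constant $C=\sqrt{\lambda_1}$, with $\lambda_1$ the lowest eigenvalue of $-\Delta(P,P^{\perp})$, characterized via the secular equation $\det\left(PX(k,\au)+P^{\perp}Y(k,\au)\right)=0$ and thus depending only on $P$ and $\au$, whereas the compactness argument proves existence of some $C>0$ without exhibiting it.
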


Consider for example a compact metric graph with at least one vertex of degree one. Impose at all vertices of degree larger than one the so--called Kirchhoff or standard boundary conditions, see for example \cite[Example 2.4]{VKRS2006} and Dirichlet boundary conditions on the vertices of degree one. Then the corresponding Laplacian is strictly positive and consequently a Poincar\'{e} type inequality holds.

\begin{proof}[Proof of Theorem~\ref{poincare}]
The operator $-\Delta(A,B)$ with $A=P$ and $B=P^{\perp}$ is self--adjoint, compare for example \cite{PKQG1} and since $L(0,\au)= P^{\perp}M(\kappa,\au)P^{\perp}\leq 0$ it follows from Theorem~\ref{Luger} that there are no negative eigenvalues. Since one has $\Ee=\emptyset$ the metric space $(\Ge,\au)$ is compact and from the Sobolev embedding theorem it follows that the spectrum of $-\Delta(A,B)$ is purely discrete. Hence under the assumption $\Ker \left(P^{\perp}M(0,\au)P^{\perp}+P\right)= \{0\}$ the operator $-\Delta(A,B)$ is even strictly positive by Proposition~\ref{kernelLB} and the infimum of the numerical range is attained by the lowest eigenvalue $$\lambda_1=\min \sigma (-\Delta(A,B))>0.$$ Consequently one has
\begin{eqnarray*}
\langle-\Delta(A,B)u,u\rangle\geq \lambda_1 \langle u,u\rangle, & \mbox{for all}\quad u\in \Dom(-\Delta(A,B)).
\end{eqnarray*}
The operator $-\Delta(A,B)$ is uniquely defined by the sesquilinear form $\bar{\delta}_{P}$ which is given by $\bar{\delta}_{P}[u,v]=\langle u^{\prime}, v^{\prime}\rangle$ on the form domain $\dom \bar{\delta}_{P}=\{\phi\in \mathcal{W} \mid P\underline{\phi}=0\},$ see \cite[Theorem 9]{PKQG1}. Since $\dom \bar{\delta}_{P}$ is the form closure of the operator domain of $-\Delta(A,B)$ the inequality 
$$\norm{u^{\prime}}^2=\langle u^{\prime},u^{\prime} \rangle\geq \lambda_1 \langle u,u\rangle $$ 
holds even for all $u\in \dom \bar{\delta}_{P}=\We_P$. The positive square root $k_1$ of the smallest positive eigenvalue $\lambda_1=k_1^2$ of $-\Delta(A,B)$ is a solution of the equation \\ $\det\left(PX(k,\au)+P^{\perp}Y(k,\au)\right)=0$, see for example \cite[Lemma 3.1]{VKRS2006}, and therefore it depends only on $P$ and $\au$. 
\end{proof}

\begin{proof}[Proof of Proposition~\ref{kernelLB}]
Eigenfunctions to the eigenvalue zero are piecewise affine. This gives the \textit{Ansatz}
\begin{eqnarray*}
\psi_0(x_j)= \alpha_j^0 + \beta_j^0x_j, & j\in \Ie 
\end{eqnarray*}
with traces 
\begin{align*}
\underline{\psi_0(\cdot)}= X_0\left(\underline{a} \right) \begin{bmatrix}  \{\alpha_j^0\}_{j\in \Ie} \\ \{\beta_j^0\}_{j\in \Ie} \end{bmatrix}, &&
\underline{\psi_0(\cdot)}^{\prime}= Y_0\left(\underline{a} \right) \begin{bmatrix}  \{\alpha_j^0\}_{j\in \Ie} \\ \{\beta_j^0\}_{j\in \Ie} \end{bmatrix},
\end{align*}
where
\begin{eqnarray*}
X_0\left(\underline{a}\right)=\begin{bmatrix} \mathds{1} & 0 \\ \mathds{1} & \au \end{bmatrix} & \mbox{and}& 
Y_0\left(\underline{a}\right)=\begin{bmatrix} 0 & \mathds{1} \\ 0 & -\mathds{1}  \end{bmatrix}.
\end{eqnarray*}
Consequently zero is an eigenvalue of the self--adjoint operator $-\Delta(A,B)$ if and only if 
\begin{align*}
\det \left(AX_0(\underline{a}) + BY_0(\underline{a})\right)=0.
\end{align*}
As $X_0\left(\underline{a} \right)$ is invertible this condition is equivalent to 
\begin{eqnarray*}
\det(A  + B M_0(\au))=0, & \mbox{where} & M_0(\au)=Y_0(\underline{a})X_0(\underline{a})^{-1}
\end{eqnarray*}
is exactly $M_0(\underline{a})=M(0,\underline{a})$, the operator from equation \eqref{M0LB}. Hence $-\Delta(A,B)$ has eigenvalue zero if and only if zero is an eigenvalue of $L(0,\au)$ considered as an operator in $\Ran P^{\perp}$ and the multiplicities of both agree. Combining this with Proposition~\ref{Luger} one obtains that $-\Delta(A,B)$ is strictly positive if and only if zero is no eigenvalue and there are no negative eigenvalues, this is the case if and only if $L(0,\au)$ is strictly negative.  
\end{proof}

\section{Variational methods and proofs}
An appropriate  method to deal with the negative eigenvalues of $-\Delta(A,B)$ is the variational principle developed by P.~A.~Binding, D.~Eschw{\'e} and H.~Langer in \cite{HL2000}, which has been extended by D.~Eschw{\'e} and M.~Langer in \cite{ML2004}. It has been successfully applied in the article \cite{AL2009} to compute the number of negative eigenvalues. Some facts are going to be revisited here as far as necessary. 

Let $I:=[\alpha,\beta)\subset \R$ be a real (not necessarily bounded) interval and and $X$ a finite dimensional Hilbert space. Denote by $\Herm\left(X\right) $ the set of all Hermitian operators in $X$. The spectrum of a function 
\begin{eqnarray*}
T(\cdot) \colon I \rightarrow \Herm\left(X\right),&  \lambda \mapsto T(\lambda) 
\end{eqnarray*}
is defined by 
$$\sigma(T(\cdot)):= \left\{ \lambda\in \C \mid \det T(\lambda)=0 \right\}.$$

The variational principle for operator valued functions is inspired by the $\min$--$\max$ principle for the linear eigenvalue problem. It has been exhibited in \cite{HL2000} that spectral points of more general operator valued functions can be found similarly to eigenvalues of linear self--adjoint operators as long as some key--properties are assumed. The results obtained in \cite{ML2004} are reduced and reformulated for the purpose of this work.

\begin{theorem}[{compare \cite[Theorem 2.1]{ML2004}}]\label{VP}
Let $T\colon [\alpha,\beta) \to \Herm\left(X\right), \lambda \mapsto T(\lambda)$ be 
\begin{enumerate}
\item norm--continuous and
\item  assume that for each $x\in X\setminus\{0\}$, the function 
$$\mathfrak{t}[x](\lambda):= \langle T(\lambda)x,x\rangle$$ is decreasing at value zero, which means that from $\mathfrak{t}[x](\lambda_0)=0$ it follows that for $\lambda<\lambda_0$, $\mathfrak{t}[x](\lambda)>0$ and for $\lambda>\lambda_0$, $\mathfrak{t}[x](\lambda)<0$ holds. 
\end{enumerate} 
Denote by $N_-$ the number of negative eigenvalues of $T(\alpha)$, by $N_+$ the number of positive eigenvalues of $T(\alpha)$ and by $N_0$ the dimension of $\Ker T(\alpha)$. Assume furthermore that 
\begin{itemize}
\item[(3)] there is a $\gamma\in[\alpha,\beta)$ such that $T(\gamma)<0$. 
\end{itemize}
Then $\sigma(T(\cdot))$ consists of $N_+$ eigenvalues
$\lambda_1 \leq \ldots \leq \lambda_n \leq \ldots \leq \lambda_{N_+}$ (counted with multiplicities), $\lambda_n>\alpha$ and $\alpha$ is an eigenvalue with multiplicity $N_0$. The eigenvalues $\lambda_n$ are given by
\begin{eqnarray*}
\displaystyle{\lambda_n= \min_{\dim S = N_-+N_0 +n} \max_{x\in S} \rho(x)} &\mbox{and} &
\displaystyle{\lambda_n= \max_{\dim S = n+N_-+N_0+1} \min_{x\perp S}  \rho(x), }
\end{eqnarray*}
where the generalized Rayleigh functional $\rho(x)$ is the unique solution of 
\begin{eqnarray*}
\mathfrak{t}[x](\cdot)=0, & \mbox{for } \norm{x}=1 
\end{eqnarray*}
and if there is no solution one sets $\rho(x)=-\infty$.
\end{theorem}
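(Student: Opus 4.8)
The plan is to give a direct finite-dimensional argument running parallel to \cite{ML2004}, organised entirely around the generalized Rayleigh functional $\rho$ and the inertia of the Hermitian matrices $T(\mu)$. For $\mu\in[\alpha,\beta)$ write $n_+(\mu)$, $n_0(\mu)$, $n_-(\mu)$ for the numbers of positive, zero and negative eigenvalues of $T(\mu)$; thus $n_+(\mu)+n_0(\mu)+n_-(\mu)=\dim X$ and, by definition, $(n_+(\alpha),n_0(\alpha),n_-(\alpha))=(N_+,N_0,N_-)$. Everything hinges on translating statements about $\rho$ into statements about this inertia.

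First I would check that $\rho$ is well defined. Fix a unit vector $x$. By hypothesis~(2) every zero of the continuous function $\mathfrak{t}[x]$ is a strict downward crossing, so $\mathfrak{t}[x]$ cannot have two distinct zeros (between them it would have to climb back through zero from below). Hypothesis~(3) forces $\mathfrak{t}[x](\gamma)<0$, so such a zero exists precisely when $\langle T(\alpha)x,x\rangle\ge 0$, and otherwise $\rho(x)=-\infty$. The key consequence, used repeatedly, is the sign dictionary
\[
\rho(x)\le\mu \iff \langle T(\mu)x,x\rangle\le 0, \qquad \rho(x)<\mu \iff \langle T(\mu)x,x\rangle< 0,
\]
which is immediate from $\mathfrak{t}[x]$ being positive to the left of $\rho(x)$ and negative to its right. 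Passing to subspaces, $\max_{x\in S}\rho(x)\le\mu$ is equivalent to $T(\mu)|_S\le 0$, and $\min_{x\perp S}\rho(x)\ge\mu$ is equivalent to $T(\mu)|_{S^\perp}\ge 0$.

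Next comes the monotonicity and counting lemma, which I regard as the core of the argument. Using the dictionary, if $T(\mu_1)|_S>0$ and $\mu_0<\mu_1$, then each $x\in S\setminus\{0\}$ has $\rho(x)>\mu_1>\mu_0$ and hence $T(\mu_0)|_S>0$; therefore $\mu\mapsto n_+(\mu)$ is non-increasing. By continuity~(1) the $N_+$ positive directions of $T(\alpha)$ stay positive near $\alpha$, so $n_+\equiv N_+$ on a right neighbourhood of $\alpha$, while $n_+(\gamma)=0$ by~(3). The eigenvalues of $T(\mu)$ move continuously, so $n_+$ is constant off $\sigma(T(\cdot))$; at a spectral point $\lambda_0$ the kernel vectors satisfy $\rho(x)=\lambda_0$, so by the dictionary they are positive just to the left and negative just to the right, whence $n_+$ drops there by exactly $\dim\Ker T(\lambda_0)$ and nowhere else. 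Summing these drops from $N_+$ to $0$ shows that $\sigma(T(\cdot))\cap(\alpha,\beta)$ consists of $N_+$ points with multiplicity, that $\alpha$ is an eigenvalue of multiplicity $N_0=\dim\Ker T(\alpha)$, and that, with $\lambda_1\le\dots\le\lambda_{N_+}$ the enumeration, $n_+(\mu)=\#\{j:\lambda_j>\mu\}$ and $n_-(\mu)=N_-+N_0+\#\{j:\lambda_j<\mu\}$ for $\mu\in(\alpha,\beta)$.

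Finally I would read off the variational formulas. Upper semicontinuity of $\rho$ (from the joint continuity of $(\mu,x)\mapsto\langle T(\mu)x,x\rangle$ together with the strict crossing) makes the relevant extrema attained, so by the dictionary and the maximal-dimension characterisation of semidefinite subspaces,
\[
\min_{\dim S=k}\max_{x\in S}\rho(x)\le\mu \iff k\le n_-(\mu)+n_0(\mu)=\dim X-n_+(\mu).
\]
Inserting $k=N_-+N_0+n$ and the counting formula for $n_+(\mu)$ turns the right-hand side into $\lambda_n\le\mu$, which forces the $\min$--$\max$ value to equal $\lambda_n$. The $\max$--$\min$ identity is the dual: one replaces $T(\mu)|_S\le 0$ by $T(\mu)|_{S^\perp}\ge 0$, uses that a subspace on which $T(\mu)\ge 0$ has dimension at most $n_+(\mu)+n_0(\mu)$, and runs the same count, the admissible dimension being shifted by the inertia $(N_-,N_0)$ of $T(\alpha)$. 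The main obstacle is precisely the counting lemma: one must argue carefully that $n_+$ changes only on $\sigma(T(\cdot))$ and there drops by \emph{exactly} the nullity---this is where the no-upward-crossing content of~(2) is indispensable---and must track the one-sided limits and the endpoint behaviour at $\alpha$ (where the $N_0$ kernel directions of $T(\alpha)$ turn negative) so that the shifts by $N_-$ and $N_0$ come out correctly.
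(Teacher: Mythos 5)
The first thing to say is that the paper does not actually prove Theorem~\ref{VP}: it imports it from \cite[Theorem 2.1]{ML2004} and only remarks that the proof rests on studying the function $\lambda\mapsto\dim T_-(\lambda)$, which is monotone, left continuous, and whose jumps give the multiplicities. Your proposal is a self-contained execution of exactly that strategy (inertia counting driven by what you call the sign dictionary), so the route is the intended one, and most of the individual pieces are correct: the well-definedness of $\rho$, the dictionary $\rho(x)\le\mu\Leftrightarrow\langle T(\mu)x,x\rangle\le 0$ together with its subspace versions, the monotonicity of the inertia functions $n_{\pm}$, and the translation of the min--max formula into inertia counting.

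However, the crux --- which you yourself flag as ``the main obstacle'' but then do not close --- is the exact-drop lemma, and the argument you sketch for it is invalid as written. Let $\lambda_0$ be a spectral point, $K=\Ker T(\lambda_0)$, and $V_+$ the spectral subspace of $T(\lambda_0)$ for its positive eigenvalues. You argue: each $x\in K$ has $\rho(x)=\lambda_0$, hence $\langle T(\mu)x,x\rangle>0$ for $\mu<\lambda_0$, and by continuity $T(\mu)$ stays positive on $V_+$, ``whence'' $n_+$ drops by exactly $\dim K$. But positivity of a Hermitian form on every vector of $K$ and on every vector of $V_+$ separately does \emph{not} give positive definiteness on $V_+\oplus K$: cross terms can destroy it, and there is no uniform lower bound with which to absorb them, since $\langle T(\mu)x,x\rangle\to 0$ on $K$ as $\mu\uparrow\lambda_0$. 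The repair stays inside your framework but must be said: apply the dictionary to the \emph{whole} subspace $V_+\oplus K$. For $x\in V_+\oplus K$ one has $\langle T(\lambda_0)x,x\rangle\ge 0$, hence $\rho(x)\ge\lambda_0$, hence $\langle T(\mu)x,x\rangle>0$ for every $\mu<\lambda_0$; thus $n_+(\mu)\ge n_+(\lambda_0)+\dim K$ for all $\mu<\lambda_0$, while norm continuity (the negative spectral subspace of $T(\lambda_0)$ stays negative definite nearby) gives $n_+(\mu)\le n_+(\lambda_0)+\dim K$ near $\lambda_0$. Together these yield the exact drop, the isolation and finiteness of $\sigma(T(\cdot))$, and --- run at $\alpha$ --- the assertion that the $N_0$ kernel directions of $T(\alpha)$ turn negative. (Note that you used precisely this subspace form of the dictionary, correctly, to prove monotonicity; only at the jump did you fall back on kernel vectors alone.) A secondary point: if you ``run the same count'' for the max--min formula with the stated dimension $\dim S=n+N_-+N_0+1$, the inertia count gives $\lambda_{n+2}$, not $\lambda_n$; the count forces the admissible dimension to be $N_-+N_0+n-1$, so the ``$+1$'' in the statement is an off-by-two slip in the paper's reformulation of \cite{ML2004}, which an honest execution of your plan would expose rather than silently reproduce.
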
  
The proof of the above theorem is based on the study of  the function 
$$\kappa\colon [\alpha, \beta) \rightarrow \N, \ \lambda\mapsto \kappa(\lambda):= \dim T_-(\lambda),$$ 
where $T_-(\lambda)$ denotes the strictly negative part of $T(\lambda)$. This function is monotone decreasing and left continuous and the height of jumps of $\kappa$ gives the multiplicity of an eigenvalue of $T(\cdot)$. An important corollary for the construction of appropriate comparison operators is the following theorem, which allows to compare two operator valued functions to each other whenever an inequality in terms of quadratic forms holds.

\begin{theorem}[{compare \cite[Corollary 2.12]{ML2004}}]\label{CVP}
Let $T(\cdot)$ and $S(\cdot)$ be two operator valued functions defined on $[\alpha,\beta)$ that satisfy the assumptions of Theorem~\ref{VP}. Denote by $\lambda^T_1\leq \ldots\leq\lambda^T_{N_+}$ and $\lambda^S_1\leq \ldots\leq\lambda^S_{M_+}$ the eigenvalues (counted with multiplicities) of $T(\cdot)$ and $S(\cdot)$ in $(\alpha,\beta)$, respectively. Assume that
$$\mathfrak{t}[x](\lambda)\leq \mathfrak{s}[x](\lambda)$$ 
holds for all $x\in X$ and all $\lambda\in [\alpha,\beta)$. Then one has $N_+\leq M_+$ and for the $N_+$ largest eigenvalues of $S(\cdot)$ and $T(\cdot)$ (counted with multiplicities) it follows that
\begin{eqnarray*}
\lambda^T_n \leq \lambda^S_n.
\end{eqnarray*}
\end{theorem}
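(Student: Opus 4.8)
The plan is to reduce the comparison of the two operator families to the linear $\min$--$\max$ characterisation furnished by Theorem~\ref{VP}, after re-indexing the eigenvalues from the top so that the competing subspaces have a common dimension.

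First I would translate the hypothesis into a pointwise operator inequality: since $\mathfrak{t}[x](\lambda)\leq\mathfrak{s}[x](\lambda)$ for every $x\in X$ and every $\lambda\in[\alpha,\beta)$, one has $T(\lambda)\leq S(\lambda)$ in the sense of quadratic forms for each fixed $\lambda$. Applying the ordinary Courant--Fischer principle to the linear Hermitian operators $T(\alpha)$ and $S(\alpha)$ then shows that $S(\alpha)$ has at least as many positive eigenvalues as $T(\alpha)$, which is precisely $N_+\leq M_+$. The same pointwise inequality yields a comparison of the two generalized Rayleigh functionals $\rho_T$ and $\rho_S$: fixing $x$ with $\norm{x}=1$ and evaluating $\mathfrak{t}[x]$ at $\lambda^{\ast}:=\rho_S(x)$ gives $\mathfrak{t}[x](\lambda^{\ast})\leq\mathfrak{s}[x](\lambda^{\ast})=0$; because $\mathfrak{t}[x]$ is decreasing at value zero (hypothesis (2) of Theorem~\ref{VP}) and hence changes sign from $+$ to $-$ at its unique zero $\rho_T(x)$, this forces $\rho_T(x)\leq\rho_S(x)$. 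The degenerate cases where one or both forms have no zero in $[\alpha,\beta)$ are handled by the same sign argument together with hypothesis (3), which guarantees that every form eventually becomes negative, so that ``no solution'' can only mean the form is already negative at $\alpha$; in all cases $\rho_T(x)\leq\rho_S(x)$ under the convention $\rho=-\infty$.

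The decisive step is the re-indexing. For $T(\cdot)$ write the eigenvalues from the largest downward, $\mu^T_j:=\lambda^T_{N_+-j+1}$ for $1\leq j\leq N_+$, and likewise $\mu^S_j:=\lambda^S_{M_+-j+1}$ for $1\leq j\leq M_+$. Substituting $n=N_+-j+1$ into the $\min$--$\max$ formula of Theorem~\ref{VP} and using $N_-^{T}+N_0^{T}+N_+=\dim X$, the competing subspaces acquire dimension $N_-^{T}+N_0^{T}+n=\dim X-j+1$; the analogous computation for $S(\cdot)$, where $N_-^{S}+N_0^{S}+M_+=\dim X$, produces the same dimension $\dim X-j+1$. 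Hence
\begin{equation*}
\mu^T_j=\min_{\dim U=\dim X-j+1}\ \max_{x\in U}\rho_T(x),\qquad \mu^S_j=\min_{\dim U=\dim X-j+1}\ \max_{x\in U}\rho_S(x),
\end{equation*}
and since $\rho_T(x)\leq\rho_S(x)$ for every $x$, monotonicity of $\max$ and of $\min$ gives $\mu^T_j\leq\mu^S_j$ for all $1\leq j\leq N_+$ (the right-hand quantity existing because $N_+\leq M_+$). This is the asserted inequality for the $N_+$ largest eigenvalues.

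I expect the main obstacle to be precisely this matching of subspace dimensions. The $\min$--$\max$ characterisations of $T(\cdot)$ and $S(\cdot)$ are governed by the a~priori different counts $N_-^{T},N_0^{T}$ and $N_-^{S},N_0^{S}$, so a naive comparison at a fixed bottom index $n$ fails. Counting from the top collapses these operator-dependent constants into the common total dimension $\dim X$, which is exactly what makes the single pointwise inequality $\rho_T\leq\rho_S$ suffice. A secondary technical point to state carefully is the passage from the form inequality to the Rayleigh-functional inequality in the boundary cases, where hypothesis (3) is needed to exclude permanently positive forms; alternatively one could run the whole argument through the monotone, left-continuous counting function $\lambda\mapsto\dim T_-(\lambda)\geq\dim S_-(\lambda)$ and compare level-crossings, which avoids the $-\infty$ convention at the cost of slightly longer bookkeeping.
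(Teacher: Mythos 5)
Your proposal is correct, but there is nothing in the paper to compare it against: Theorem~\ref{CVP} is stated as an imported result (``compare \cite[Corollary 2.12]{ML2004}'') and the paper supplies no proof of it, just as it supplies none for Theorem~\ref{VP}. Measured against that cited source, your derivation is the natural one, specialized to finite dimensions, and all three steps are sound: the form inequality at $\lambda=\alpha$ gives $N_+\le M_+$ by Courant--Fischer; hypothesis (2), together with (3) to handle the no-zero cases (a form with no zero must be negative on all of $[\alpha,\beta)$ by continuity and negativity at $\gamma$), upgrades $\mathfrak{t}[x]\le\mathfrak{s}[x]$ to the Rayleigh-functional inequality $\rho_T(x)\le\rho_S(x)$ including the $-\infty$ convention; and the min--max formula of Theorem~\ref{VP} then transfers this to the eigenvalues. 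You are also right that the top-down re-indexing is the one step requiring genuine care: the offsets $N_-^T+N_0^T$ and $N_-^S+N_0^S$ differ in general, and only after rewriting the competing subspace dimension as $\dim X-j+1$, using $N_-+N_0+N_+=\dim X$ separately for each family, do the two characterizations become comparable; this reading of the conclusion (the $j$-th largest eigenvalue of $T(\cdot)$ against the $j$-th largest of $S(\cdot)$) is exactly how the paper applies the theorem in the proofs of Theorems~\ref{theorem1LB}--\ref{theorem3LB}, where all eigenvalues are enumerated decreasingly. Your closing alternative, comparing the counting functions $\lambda\mapsto\dim T_-(\lambda)$ and $\lambda\mapsto\dim S_-(\lambda)$, is in fact closer to the mechanism the paper sketches immediately after Theorem~\ref{VP} (monotone decreasing, left continuous, jumps equal to multiplicities), so either route is consistent with the machinery the paper relies on.
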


\begin{lemma}\label{lemma1LB}
The operator valued function 
\begin{eqnarray*}
L(\cdot,\au) \colon [0,\infty) \rightarrow  \Herm(\ran P^{\perp}),
\end{eqnarray*}
defined by $L(\kappa,\au)$, satisfies the assumptions of Theorem~\ref{VP}. 
\end{lemma}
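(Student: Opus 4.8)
The plan is to verify the three hypotheses of Theorem~\ref{VP} for $T(\cdot)=L(\cdot,\au)$ on the interval $[\alpha,\beta)=[0,\infty)$, reducing everything to three scalar functions via the diagonalisation $M(\kappa,\au)=QD(\kappa,\au)Q$. Since $x\in\ran P^{\perp}$ means $P^{\perp}x=x$, and $Q$ is a symmetry, I would first record that for $x\neq 0$, writing $z=Qx\neq 0$ with components relative to $\Ke=\Ke_{\Ee}\oplus\Ke_{\Ie}^{-}\oplus\Ke_{\Ie}^{+}$,
\begin{align*}
\mathfrak{t}[x](\kappa) ={}& \langle Lx,x\rangle - \kappa\abs{z_{\Ee}}^2 \\
& {}- \sum_{j\in\Ie}\kappa\tanh(\kappa a_j/2)\abs{z_j^{-}}^2 - \sum_{j\in\Ie}\frac{\kappa}{\tanh(\kappa a_j/2)}\abs{z_j^{+}}^2 .
\end{align*}
Thus all three hypotheses follow from properties of the three diagonal functions $-\kappa$, $-\kappa\tanh(\kappa a_j/2)$ and $-\kappa/\tanh(\kappa a_j/2)$, the first term $\langle Lx,x\rangle$ being constant in $\kappa$.

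For hypothesis~(1), norm-continuity on the finite-dimensional space $\ran P^{\perp}$ is equivalent to entrywise continuity. The first two diagonal functions are manifestly continuous on $[0,\infty)$, and the third extends continuously to $\kappa=0$ with value $-2/a_j$, in agreement with the definition of $M(0,\au)$ in \eqref{M0LB}. Hence $L(\cdot,\au)$ is norm-continuous.

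For hypothesis~(2), I would show that each of the three diagonal functions is non-increasing on $[0,\infty)$ and strictly decreasing on $(0,\infty)$. This is immediate for $-\kappa$, and for $-\kappa\tanh(\kappa a_j/2)$ it follows by differentiation. The one point that requires a genuine computation — and which I expect to be the main obstacle — is the monotonicity of $-\kappa/\tanh(\kappa a_j/2)$, i.e. that $u\mapsto u\coth u$ is strictly increasing on $(0,\infty)$; substituting $u=\kappa a_j/2$, its derivative has the sign of $\tfrac12\sinh(2u)-u$, which is positive since $\sinh(2u)>2u$ for $u>0$. Because $z\neq 0$ forces at least one weight $\abs{z_{\bullet}}^2$ to be strictly positive, $\mathfrak{t}[x]$ is then non-increasing on $[0,\infty)$ and strictly decreasing on $(0,\infty)$. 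From this I would deduce that $\mathfrak{t}[x]$ is decreasing at value zero: at a zero $\kappa_0>0$ the strict decrease on $(0,\infty)$ settles all comparisons with values on either side, while the boundary case $\kappa_0=0$ is covered by the (vacuous) left-hand condition together with strict decrease to the right.

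For hypothesis~(3), I would exhibit a $\gamma$ with $L(\gamma,\au)<0$. Choosing a threshold $\kappa_{\ast}$ so that $\tanh(\kappa a_{\min}/2)\geq\tfrac12$ for $\kappa\geq\kappa_{\ast}$, every diagonal entry of $D(\kappa,\au)$ is then $\leq-\kappa/2$ (using $\tanh\le 1$ for the $\mu$-block and $\tanh(\kappa a_j/2)\ge\tfrac12$ for the $\lambda$-block). Since $Q$ is a unitary symmetry this gives $M(\kappa,\au)=QD(\kappa,\au)Q\leq-\tfrac{\kappa}{2}\mathds{1}$ on $\Ke$, and compressing to $\ran P^{\perp}$ yields $L(\kappa,\au)\leq(\norm{L}-\kappa/2)\mathds{1}$ there. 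Taking any $\gamma>\max\{2\norm{L},\kappa_{\ast}\}$ gives $L(\gamma,\au)<0$, which completes the verification of all three hypotheses.
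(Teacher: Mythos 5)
Your proof is correct and takes essentially the same route as the paper's: both reduce everything to the diagonalisation $M(\kappa,\au)=QD(\kappa,\au)Q$ and deduce norm-continuity, the ``decreasing at value zero'' property, and hypothesis (3) from the continuity and strict monotonicity of the three scalar diagonal functions $-\kappa$, $-\kappa\tanh(\kappa a_j/2)$ and $-\kappa/\tanh(\kappa a_j/2)$. If anything, your verification of hypothesis (3) via the uniform operator bound $M(\kappa,\au)\leq -\tfrac{\kappa}{2}\mathds{1}$ for large $\kappa$ is slightly more careful than the paper's appeal to the pointwise limit $\langle L(\kappa,\au)x,x\rangle\to-\infty$.
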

Multiplying $L(\kappa,\au)$ from both sides with the symmetry $Q$ one obtains the unitarily equivalent operator
\begin{align*}
L_Q(\kappa,\au)=QLQ +  QP^{\perp}Q D(\kappa,\au)QP^{\perp}Q,
\end{align*}\index{$L_Q$, $L_Q(\kappa,\au)$}\index{$P_Q$, $P_Q^{\perp}$} which is considered as an operator in the space $\Ran QP^{\perp}Q$. The operator $P_Q:=QPQ$ is an orthogonal projector with orthogonal complement $P_Q^{\perp}:=\mathds{1}-P_Q$, $P_Q^{\perp}=QP^{\perp}Q$ and $L_Q:=QLQ$ defines an operator in the space $\Ran P_Q^{\perp}$, which is isometrically isomorphic to $\Ran P^{\perp}$. 
\begin{proof}
The function $L(\cdot,\au)$ is norm--continuous, because the function $D(\cdot,\au)$ is already norm--continuous. It remains to prove that the function $\mathfrak{l}[x](\cdot)=\langle L(\cdot,\au)x,x \rangle$ is decreasing at the point zero for all $x\in \ran P^{\perp}$. This is implied by the statement that the function $\mathfrak{l}_Q[x](\cdot)=\langle L_Q(\cdot,\au)x,x \rangle$ is strictly decreasing for all $x\in \ran P_Q^{\perp}$ with $\norm{x}=1$. Since $D(\cdot,\au)$ is a diagonal matrix with strictly decreasing functions on the diagonal, the function defined by $\langle (P_Q+L_Q + D(\cdot,\au)) x,x \rangle$ is strictly decreasing for any $x\in \Ke$, in particular for all $x\in\ran P_Q^{\perp}$. Furthermore for any $x\in \Ran P_Q^{\perp}\setminus\{0\}$ one has $\langle L(k,\au)x,x \rangle \to -\infty$ for $k\to \infty$ and therefore also Assumption $(3)$ of Theorem~\ref{VP} is fulfilled.
\end{proof}

One considers different operator valued functions in order to compare them to $L(\cdot,\au)$ -- or equivalently to $L_Q(\cdot,\au)$ -- by means of Theorem~\ref{CVP}. For this purpose take into account

\begin{lemma}\label{UG}
Let be $0< a_{\min}\leq a_i \leq a_{\max}$. Then the following elementary estimates hold for $\kappa \geq 0$,
\begin{align}
-\kappa \tanh\left(\tfrac{\kappa a_{\max}}{2}\right) \ \ \ &\leq& -\kappa \tanh\left(\tfrac{\kappa a_i}{2}\right) \ \ \ &\leq& -\kappa \tanh\left(\tfrac{\kappa a_{\min}}{2}\right), \label{UG1}\\
-\frac{\kappa}{\tanh\left(\frac{\kappa a_{\min}}{2}\right)} \ \ \ &\leq& -\frac{\kappa}{\tanh\left(\frac{\kappa a_{i}}{2}\right)} \ \ \  &\leq& -\frac{\kappa}{\tanh\left(\frac{\kappa a_{\max}}{2}\right)}, \label{UG2a} \\
-\frac{\kappa}{\tanh\left(\frac{\kappa a_{\max}}{2}\right)}+\frac{2}{a_{\max}}  \ \ \ &\leq& -\frac{\kappa}{\tanh\left(\frac{\kappa a_{i}}{2}\right)}+\frac{2}{a_i} \ \ \  &\leq& -\frac{\kappa}{\tanh\left(\frac{\kappa a_{\min}}{2}\right)}+\frac{2}{a_{\min}}, \label{UG2} \\
-\frac{\kappa}{\tanh\left(\frac{\kappa a_{i}}{2}\right)} \ \ \  &<& -\kappa \  \ \ \  &\leq& -\kappa \tanh\left(\tfrac{\kappa a_{i}}{2}\right),  \label{UG3}\\
-\kappa \ \ \ &\leq& -\kappa \tanh\left(\tfrac{\kappa a_{i}}{2}\right) \ \ \ &\leq&   -\frac{\kappa}{\tanh\left(\frac{\kappa a_{i}}{2}\right)} +\frac{2}{a_i}.\label{UG4}
\end{align}
In \eqref{UG3} and \eqref{UG4} equality holds only for $\kappa=0$. In \eqref{UG1} equality holds only if $\kappa=0$ or if $a_i=a_{\min}=a_{\max}$. In \eqref{UG2a} and \eqref{UG2} equality holds only if $a_i=a_{\min}=a_{\max}$. 
\end{lemma}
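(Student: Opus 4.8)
The plan is to reduce all of \eqref{UG1}--\eqref{UG4} to two elementary facts about hyperbolic functions: that $s\mapsto\tanh s$ is strictly increasing with $0\le\tanh s<1$ for $s\ge 0$, and that $\sinh s>s$ for $s>0$. Throughout I would fix $\kappa>0$, treating the case $\kappa=0$ separately via the limits recorded in \eqref{M0LB}, and substitute $t=\kappa a/2$, so that monotonicity in $a$ and monotonicity in $t$ coincide.

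First I would dispose of \eqref{UG1} and \eqref{UG2a}, which are pure monotonicity statements in the edge length. For fixed $\kappa>0$ the map $a\mapsto\tanh(\kappa a/2)$ is strictly increasing and strictly positive; hence $a\mapsto-\kappa\tanh(\kappa a/2)$ is strictly decreasing, which yields \eqref{UG1}, while $a\mapsto-\kappa/\tanh(\kappa a/2)$, being $-\kappa$ times the reciprocal of a positive increasing function, is strictly increasing, which yields \eqref{UG2a}. Since $a_{\min}\le a_i\le a_{\max}$, both chains follow immediately, and strictness of the monotonicity shows that equality forces $a_i=a_{\min}=a_{\max}$.

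The only inequality requiring genuine work is \eqref{UG2}, and this is where I expect the main (if mild) obstacle. With $t=\kappa a/2$ I would rewrite the relevant quantity as
\[
-\frac{\kappa}{\tanh(\kappa a/2)}+\frac{2}{a}=-\kappa\left(\coth t-\frac{1}{t}\right),
\]
so it suffices to show that $g(t):=\coth t-1/t$ is strictly increasing on $(0,\infty)$; then $-\kappa\,g(t)$ is strictly decreasing in $a$, and the chain \eqref{UG2} (whose endpoints run opposite to $a$) follows. I would compute $g'(t)=\tfrac{1}{t^2}-\tfrac{1}{\sinh^2 t}$ and invoke $\sinh t>t$ to conclude $g'(t)>0$. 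This is the single step not reducible to the monotonicity of $\tanh$, and the stated equality case ($a_i=a_{\min}=a_{\max}$) then follows from the strictness for $\kappa>0$.

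Finally \eqref{UG3} and \eqref{UG4} are sign considerations after the same substitution. For \eqref{UG3}, the bounds $0\le\tanh t<1$ give $1/\tanh t>1\ge\tanh t$, and multiplying through by $-\kappa\le 0$ produces both inequalities; the right-hand one reads $\kappa(1-\tanh t)\ge 0$, with equality precisely when $\kappa=0$. The left inequality of \eqref{UG4} is identical to the right inequality of \eqref{UG3}. For the remaining inequality of \eqref{UG4} I would reduce $-\kappa\tanh t\le-\kappa\,g(t)$ to $\tanh t\ge\coth t-1/t$, i.e. $\coth t-\tanh t\le 1/t$; since $\coth t-\tanh t=2/\sinh(2t)$ this is exactly $\sinh(2t)\ge 2t$, once more an instance of $\sinh s\ge s$, with equality only at $t=0$, that is $\kappa=0$. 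Collecting the strictness statements from each step gives the claimed equality conditions.
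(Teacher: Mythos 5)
Your proof is correct, and while it shares the paper's overall strategy (reducing everything to elementary monotonicity facts about hyperbolic functions), it diverges at exactly the two inequalities that need more than the monotonicity of $\tanh$, and in one of them your route is sounder than the paper's own argument. For \eqref{UG1}, \eqref{UG2a} and \eqref{UG3} you argue as the paper does: substitute $y=\kappa a/2$ and use that $\tanh$ is strictly increasing with $0\le\tanh y<1$. For \eqref{UG2}, however, the paper claims the result follows from the strict decrease of $h(y)=1-y/\tanh(y)$ together with ``multiplying with $2/a_{\min}\geq 2/a_{i}$''; as written this is not conclusive, because the values of $h$ are nonpositive, so multiplying the larger value $h(\kappa a_{\min}/2)$ by the larger positive constant $2/a_{\min}$ moves it the \emph{wrong} way, and no combination of these two facts alone yields \eqref{UG2}. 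What is actually needed is that $h(y)/y=1/y-\coth y$ is decreasing, which is exactly your statement that $g(t)=\coth t-1/t$ is increasing; this genuinely requires $\sinh t>t$, and your computation $g'(t)=1/t^{2}-1/\sinh^{2}t>0$ supplies precisely that missing step. In this sense your argument repairs a gap in the paper's sketch of \eqref{UG2}. For \eqref{UG4} the paper reduces to $-y\tanh^{2}(y)\leq -y+\tanh(y)$ and integrates a comparison of derivatives from equal initial values, whereas you use the identity $\coth t-\tanh t=2/\sinh(2t)$ together with $\sinh(2t)\geq 2t$; both are correct, but yours has the merit of unifying the two nontrivial inequalities \eqref{UG2} and \eqref{UG4} under the single elementary fact $\sinh s>s$, and your convention of reading the $\kappa=0$ cases through the limits \eqref{M0LB} matches the paper's implicit usage.
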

\begin{proof}
The function $-\tanh(y)$ is strictly decreasing for $y\geq 0$. Therefore plugging in
\begin{eqnarray}\label{plugin}
y_1:=\frac{\kappa a_{\min}}{2} \leq \frac{\kappa a_{i}}{2}=:y_2
\end{eqnarray}
and multiplying with $\kappa\geq 0$ yields inequality \eqref{UG1}. With a similar calculation one obtains \eqref{UG2a}. Note that the function $\tfrac{-y}{\tanh(y)}+1$ is strictly decreasing for $y\geq 0$ and hence one obtains with \eqref{plugin} and by multiplying with 
\begin{eqnarray*}
\frac{2}{a_{\min}} \geq \frac{2}{a_{i}}
\end{eqnarray*}
the inequality \eqref{UG2}. Inequality \eqref{UG3} follows already from the inequality $\tanh(y)< 1$. The last inequality \eqref{UG4} is equivalent to $-y\tanh^2(y) \leq -y + \tanh(y)$ for $y\geq 0$, which in turn is true, because one has for the derivatives
\begin{eqnarray*}
\frac{d}{dy}\left(-y\tanh^2(y)\right) = -\tanh^2(y) - 2y \tanh(y)(1-\tanh(y)^2) 
\end{eqnarray*}
and 
\begin{eqnarray*}
\frac{d}{dy}\left(-y + \tanh(y)\right) = -\tanh^2(y),
\end{eqnarray*}
and for the initial values at $y=0$ the equality $-0\tanh^2(0)=0 = -0 + \tanh(0)$.  
\end{proof}

\begin{proof}[Proof of Theorem~\ref{theorem1LB}]
Taking into account
\begin{align*}
L_Q(\kappa,\au) =& L_Q + P^{\perp}_Q D(\kappa,\au) P^{\perp}_Q \\
=& L_Q + P^{\perp}_Q D(0,\au) P^{\perp}_Q + P^{\perp}_Q \left(D(\kappa,\au)-D(0,\au) \right) P^{\perp}_Q\\
=& L_Q(0,\au) + P^{\perp}_Q \left(D(\kappa,\au)-D(0,\au) \right) P^{\perp}_Q
 \end{align*}
one considers the operator valued functions
\begin{eqnarray*}
L_1(\kappa,\au) = L_Q(0,\au) - \left(\frac{\kappa}{\tanh(\kappa a_{\min}/2)} - \frac{2}{a_{\min}}\right)P_Q^{\perp} 
\end{eqnarray*}
and 
\begin{eqnarray*}
L_2(\kappa,\au) = L_Q(0,\au) - \kappa P_Q^{\perp}
\end{eqnarray*}
as comparison operators. One has by \eqref{UG2} and \eqref{UG4} that 
\begin{eqnarray*}
-\kappa \mathds{1} \leq D(k,\au)-D(0,\au) \leq - \left(\frac{\kappa}{\tanh(\kappa a_{\min}/2)} - \frac{2}{a_{\min}}\right) \mathds{1} & \mbox{for } \kappa\geq 0,
\end{eqnarray*}
and hence
\begin{eqnarray*}
L_2(\kappa) \leq L_Q(\kappa) \leq  L_1(\kappa) & \mbox{for } \kappa\geq 0.
\end{eqnarray*}
By definition the first $n$ positive eigenvalues of $L_2(\kappa)$ are $l_i$, for $i=1,\ldots,n$ and the first $n$ positive eigenvalues of $L_1(\kappa)$ are $\eta(l_i,a_{\min})$, for $i=1,\ldots,n$. The operator valued functions $L_1(\cdot)$ and $L_2(\cdot)$ are strictly decreasing and continuous. The proof of this is analogue to the one of Lemma~\ref{lemma1LB}. Theorem~\ref{CVP} delivers the estimates for the numbers $\kappa_i$.

To prove the optimality of the resulting lower bound on the spectrum one considers the spaces
\begin{eqnarray*}
E_{a_{\min}}:=\mbox{span} \left\{ \begin{bmatrix} 0 \\ 0 \\ e_i \end{bmatrix} \Bigg\vert a_i=a_{\min} \right\} &\mbox{and} & \Ker(L_Q(0,\au)-l_1),
\end{eqnarray*}
which is the eigenspace of $L_Q(0,\au)$ to its largest positive eigenvalue $l_1$. One proves that the bound $\eta(l_i,a_{\min})$ is optimal if and only if there is a vector $x\neq 0$ with
$$x \in \Ker(QLQ-l_1) \cap E_{a_{\min}}.$$ 
Assume that $x\in \Ker(L_Q(0,\au)-l_1)\cap E_{a_{\min}}$ with $\norm{x}=1$. Then one has 
$$\mathfrak{l}_Q[x](\kappa)=\langle x, L_Q(\kappa,\au)x\rangle = l_1 + \frac{2}{a_{\min}} - \frac{\kappa}{\tanh(\kappa a_{\min}/2)}.$$ 
Denote the unique zero of this function by $\kappa_0$, and observe that $\kappa_0=\eta(l_1,a_{\min})$. Assume that $y\neq 0$ and denote furthermore by $\rho(y)$ the solution of $\mathfrak{l}_Q[y](\cdot)=0$ or if there is no solution of this one sets $\rho(y)=-\infty$. Note that $\rho(y)\leq \rho(x)$ holds for all $y \in \ran P_Q^{\perp}$ with $\norm{y}=1$, because on the one hand $l_1\geq \langle y, L_Q(0,\au) y\rangle$ holds and on the other hand for the inequalities \eqref{UG1}, \eqref{UG2} and \eqref{UG3} 
$$\frac{2}{a_{\min}} - \frac{\kappa}{\tanh(\kappa a_{\min}/2)} \geq \langle y, \left(D(\kappa,\au)-D(0,\au)\right) y\rangle$$ holds. Since there is equality for $y=x$ one concludes 
\begin{align*}
\rho(x)= \max_{\underset{\norm{y}=1}{y\in \ran P_Q^{\perp}}} \rho(y).
\end{align*}
By the variational characterization of the eigenvalues given in Theorem~\ref{VP} it follows that $\kappa_0$ is indeed a zero of \\ $\det L_Q(\cdot,\au)$ and hence $-\kappa_0^2$ is the lowest eigenvalue of $-\Delta(A,B)$.

Conversely assume that the bound $\eta(l_1,a_{\min})$ is taken, which means that there exists a vector $x\in \ran P^{\perp}$ with $\norm{x}=1$ such that $L_Q(\eta(l_1,a_{\min}),\au)x=0$.  Consider again the function 
$$\mathfrak{l}_Q[x](\kappa)= \langle L_Q(\kappa,\au)x,x\rangle .$$ 
One first shows that $$\mathfrak{l}_Q[x](0) = l_1 $$ holds. Assume that $\mathfrak{l}_Q[x](0) < l_1$. Since $D(\kappa,\au)-D(0,\au) \leq \frac{2}{a_{\min}}- \frac{\kappa}{\tanh(\kappa a_{\min}/2)}$ for $\kappa\geq 0$ it would follow that the unique solution of $\mathfrak{l}_Q[x](\kappa)=0$ is smaller than $\eta(l_1,a_{\min})$, which is a contradiction to the assumption. 
\\
Assume conversely that $\mathfrak{l}_Q[x](0) > l_1$. This is a contradiction to the inequality $L_Q(\kappa,\au) \leq  L_1(\kappa)$ for $\kappa\geq 0$ and the claim follows.

Since 
$$l_1= \max_{\underset{\norm{y}=1}{y\in \ran P_Q^{\perp}}}  \langle y, L(0,\au)y \rangle $$ it follows from $\mathfrak{l}_Q[x](0) = l_1$ by the classical $\min-\max$--principle that \\ $x\in \Ker(L_Q(0,\au)-l_1)$.

Assume now that $x\notin E_{a_{\min}}$. One has by \eqref{UG2} and \eqref{UG4} that 
\begin{eqnarray*}
\langle (D(\kappa,\au)-D(0,\au))x,x\rangle \leq \frac{2}{a_{\min}}- \frac{\kappa}{\tanh(\kappa a_{\min}/2)}, & \mbox{for } \kappa> 0 \ \mbox{with } \norm{x}=1,
\end{eqnarray*}
and that equality holds only if $x\in E_{a_{\min}}$. Hence $\mathfrak{l}_Q[x](\kappa) < l_1 - \frac{2}{a_{\min}} - \frac{\kappa}{\tanh(\kappa a_{\min}/2)}$ would hold for $\kappa >0$ and it would follow that the unique solution of $\mathfrak{l}_Q[x](\kappa)=0$ was smaller than $\eta(l_1,a_{\min})$. This is a contradiction and hence $x\in E_{a_{\min}}$. Note that 
$$QE_{a_{\min}}:=\mbox{span} \left\{ \begin{bmatrix} 0 \\ e_i \\ -e_i \end{bmatrix} \Bigg\vert a_i=a_{\min} \right\}.$$
\end{proof}

\begin{proof}[Proof of Theorem~\ref{theorem2LB}]
Define the comparison operator
\begin{eqnarray*}
L_3(\kappa,a_{\min}) = L_Q - \kappa \tanh(\kappa a_{\min}/2)P_Q^{\perp} 
\end{eqnarray*}
By definition the first $n$ positive eigenvalues of $L_3(\kappa,a_{\min})$ are $\nu(m_i,a_{\min})$, for $i=1,\ldots,n$. It is straight forward to verify that the operator valued function $L_3(\cdot,a_{\min})$ is strictly decreasing and continuous. From the inequalities \eqref{UG1} and \eqref{UG3} one reads
\begin{eqnarray*}
D(k,\au) \leq - \kappa \tanh(\kappa a_{\min}/2) \mathds{1} & \mbox{for } \kappa\geq 0,
\end{eqnarray*}
and consequently
\begin{eqnarray*}
L_Q(\kappa,\au) \leq  L_3(\kappa,a_{\min}), & \mbox{for } \kappa\geq 0.
\end{eqnarray*}
Applying Theorem~\ref{CVP} proves the first part of the theorem. 

To prove the optimality of the resulting lower bound on the spectrum one considers the spaces
\begin{eqnarray*}
F_{a_{\min}}:=\mbox{span} \left\{ \begin{bmatrix} 0 \\ e_i \\ 0 \end{bmatrix} \Bigg\vert a_i=a_{\min} \right\} &\mbox{and} & \Ker(L_Q-m_1),
\end{eqnarray*}
which is the eigenspace of $L_Q$ to its largest positive eigenvalue $m_1$. One proves as in the proof of Theorem~\ref{theorem1LB} that the bound $-\nu(m_i,a_{\min})^2$ is optimal if and only if there is a vector $x\neq 0$ with
$$x \in \Ker(L_Q-m_1) \cap F_{a_{\min}}.$$ 

To prove the lower bounds on the numbers $\kappa_i$ consider
\begin{eqnarray*}
L_4(\kappa,a_{\min}) = L_Q - \left( \frac{\kappa}{\tanh(\kappa a_{\min}/2)}\right)P^{\perp}_Q. 
\end{eqnarray*}
For each $m_i$ with $m_i > \tfrac{2}{a_{\min}}$ there is one eigenvalue of $L_4(\cdot,a_{\min})$, which is by definition $\eta(m_i-\tfrac{2}{a_{\min}},a_{\min})$. Since by \eqref{UG2a} and \eqref{UG3}
\begin{eqnarray*}
\frac{-\kappa}{\tanh(\kappa a_{\min}/2)}\mathds{1} \leq D(k,\au) & \mbox{for } \kappa\geq 0,
\end{eqnarray*}
also
\begin{eqnarray*}
L_4(\kappa,a_{\min})\leq L_Q(\kappa,\au), & \mbox{holds for } \kappa\geq 0,
\end{eqnarray*}
and the claim follows with Theorem~\ref{CVP}. 
\end{proof}

\begin{proof}[Proof of Theorem~\ref{theorem3LB}]
The proof of Theorem~\ref{theorem3LB} takes advantage of the estimates 
\begin{eqnarray*}
-\kappa + \frac{2}{a}\geq  -\kappa\tanh\left(\frac{a}{2}\kappa\right) & \mbox{for } \kappa \geq 0
\end{eqnarray*}
which is equivalent to $y-1 \leq y\tanh(y)$ for $y\geq 0$. Together with \eqref{UG3} this yields for $\kappa \geq 0$ the inequality
\begin{eqnarray*}\index{$M_1(\kappa,\au)$}
D(\kappa,\au) \leq  M_1(\kappa,\au), & \mbox{where }   M_1(\kappa,\au) =  \begin{bmatrix} -\kappa & 0 & 0 \\ 0 & -\kappa + \tfrac{2}{\au} & 0 \\ 0 & 0 & -\kappa   \end{bmatrix}.
\end{eqnarray*}
This in turn gives together with the lower estimates from Theorem~\ref{theorem1LB} the inequality 
$$ L(0,\au)-\kappa P^{\perp} \leq  L(\kappa,\au) \leq R(\kappa,\au).$$ 
Applying Theorem~\ref{CVP} yields the claim. 
\end{proof}

\end{document}